\theoremstyle{definition}
\numberwithin{equation}{section}
\newtheorem{thm}{Theorem}[section]
\renewcommand{\det}{\mathop{\mathrm{det}}\,}
\theoremstyle{definition}
\newtheorem{rem}[thm]{Remark}
\begin{document}
\title{Multivariate Bernoulli polynomials}
\author{Genki Shibukawa\thanks{
This work was supported by Grant-in-Aid for JSPS Fellows (Number 18J00233).}\\}
\date{
\small MSC classes\,:\,11B68, 33C67, 43A90}
\pagestyle{plain}

\maketitle

\begin{abstract}
We introduce a 
multivariate analogue of Bernoulli polynomials and give their fundamental properties: difference and differential relations, symmetry, explicit formula, inversion formula, multiplication theorem, and binomial type formula. 
Further, we consider a multivariate analogue of the multiple Bernoulli polynomials and give their fundamental properties. 
\end{abstract}

\section{Introduction}
The Bernoulli numbers $B_{m}$ are defined by the generating function
\begin{align}
\label{eq:def of B numbers}
\frac{u}{e^{u}-1}
   =
   \sum_{m=0}^{\infty}
   \frac{B_{m}}{m!}u^{m}, \quad |u|<2\pi , 
\end{align}
and the Bernoulli polynomials $B_{m}(z)$ by means of 
\begin{align}
\label{eq:def of B polynomials}
\frac{u}{e^{u}-1}e^{zu}
   =
   \sum_{m=0}^{\infty}
   \frac{B_{m}(z)}{m!}u^{m}, \quad |u|<2\pi . 
\end{align}

Bernoulli polynomial $B_{m}(z)$ has the following fundamental properties (see for example \cite{E} 1.13). 
\newpage 
\begin{align}
\label{eq:found prop1}
B_{m}(0)
   &=
   B_{m}, \\
\label{eq:found prop2}
B_{m}(z+1)-B_{m}(z)
   &=
   mz^{m-1} \quad (m\geq 0), \\
\label{eq:found prop3}
B_{m}^{\prime}(z)
   &=
   mB_{m-1}(z), \\
\label{eq:found prop4}
B_{m}(1-z)
   &=
   (-1)^{m}B_{m}(z), \\
\label{eq:found prop5}
B_{m}(z)
   &=
   \sum_{n=0}^{m}
   \binom{m}{n}B_{n}z^{m-n}, \\
\label{eq:found prop8}
z^{m}
   &=
   \frac{1}{m+1}\sum_{n=0}^{m}\binom{m+1}{n}B_{n}(z)
   =
   \sum_{n=0}^{m}\frac{1}{m-n+1}\binom{m}{n}B_{n}(z), \\  
\label{eq:found prop6}
\sum_{i=0}^{N-1}
   B_{m}\left(z+\frac{i}{N}\right)
   &=
   N^{1-m}B_{m}(Nz), \\
\label{eq:found prop7}
B_{m}(z+1)
   &=
   \sum_{n=0}^{m}
   \binom{m}{n}
   B_{n}\left(z\right).
\end{align}

Let us describe proofs of (\ref{eq:found prop1}) - (\ref{eq:found prop7}). \\
(\ref{eq:found prop1}) It follows from the definition of Bernoulli numbers (\ref{eq:def of B numbers}) and polynomials (\ref{eq:def of B polynomials}). \\
(\ref{eq:found prop2}) By the generating function of Bernoulli polynomials (\ref{eq:def of B polynomials}), the index law and the definition of exponential function $e^{zu}$, we have 
\begin{align}
\sum_{m\geq 0}
   (B_{m}(z+1)-B_{m}(z))\frac{u^{m}}{m!}
   &=
   \frac{u}{e^{u}-1}(e^{(z+1)u}-e^{zu}) \nonumber \\
   &=
   \frac{u}{e^{u}-1}(e^{u}e^{zu}-e^{zu}) \nonumber \\
   &=
   ue^{zu} \nonumber \\
   &=
   \sum_{m\geq 0}z^{m}u\frac{u^{m}}{m!} \nonumber \\
   &=
   \sum_{m\geq 0}z^{m}\frac{u^{m+1}}{(m+1)!}(m+1) \nonumber \\
   &=
   \sum_{m\geq 0}mz^{m-1}\frac{u^{m}}{m!}. \nonumber
\end{align} 
Similarly, we obtain the following. \\
(\ref{eq:found prop3}) 
\begin{align}
\sum_{m\geq 0}
   B_{m}^{\prime}(z)\frac{u^{m}}{m!}
   &=
   \frac{u}{e^{u}-1}\partial_{z}e^{zu} \nonumber \\
   &=
   \frac{u}{e^{u}-1}e^{zu}u \nonumber \\
   &=
   \sum_{m\geq 0}
   B_{m}(z)u\frac{u^{m}}{m!} \nonumber \\
   &=
   \sum_{m\geq 0}
   B_{m}(z)\frac{u^{m+1}}{(m+1)!}(m+1) \nonumber \\
   &=
   \sum_{m\geq 0}
   mB_{m-1}(z)\frac{u^{m}}{m!}. \nonumber
\end{align} 
(\ref{eq:found prop4}) 
\begin{align}
\sum_{m\geq 0}
   B_{m}(1-z)\frac{u^{m}}{m!}
   &=
   \frac{u}{e^{u}-1}e^{(1-z)u} \nonumber \\
   &=
   \frac{ue^{u}}{e^{u}-1}e^{-zu} \nonumber \\
   &=
   \frac{-u}{e^{-u}-1}e^{-zu} \nonumber \\
   &=
   \sum_{m\geq 0}
   B_{m}(z)\frac{(-u)^{m}}{m!} \nonumber \\
   &=
   \sum_{m\geq 0}
   (-1)^{m}B_{m}(z)\frac{u^{m}}{m!}. \nonumber
\end{align} 
(\ref{eq:found prop5})
\begin{align}
\sum_{m\geq 0}
   B_{m}(z)\frac{u^{m}}{m!}
   &=
   \frac{u}{e^{u}-1}e^{zu} \nonumber \\
   &=
   \sum_{N\geq 0}B_{N}\frac{u^{N}}{N!}\sum_{n\geq 0}z^{n}\frac{u^{n}}{n!} \nonumber \\
   &=
   \sum_{N\geq 0}B_{N}\sum_{n\geq 0}z^{n}\frac{u^{N}}{N!}\frac{u^{n}}{n!} \nonumber \\
   &=
   \sum_{m\geq 0}\sum_{n= 0}^{m}B_{m-n}z^{n}\binom{m}{n}\frac{u^{m}}{m!}. \nonumber
\end{align}
(\ref{eq:found prop8})
\begin{align}
\sum_{m\geq 0}
   z^{m}\frac{u^{m}}{m!}
   &=
   e^{zu} \nonumber \\
   &=
   \frac{e^{u}-1}{u}
   \sum_{n\geq 0}
   B_{n}(z)\frac{u^{n}}{n!} \nonumber \\
   &=
   \sum_{N\geq 0}\frac{1}{N+1}\frac{u^{N}}{N!}
   \sum_{n\geq 0}
   B_{n}(z)\frac{u^{n}}{n!} \nonumber \\
   &=
   \sum_{N\geq 0}\sum_{n\geq 0}
   \frac{1}{N+1}
   B_{n}(z)
   \frac{u^{N}}{N!}
   \frac{u^{n}}{n!} \nonumber \\
   &=
   \sum_{N\geq 0}\sum_{n\geq 0}
   \frac{1}{N+1}
   B_{n}(z)
   \sum_{m\geq 0}
   \binom{m}{N}\frac{u^{m}}{m!} \nonumber \\
   &=
   \sum_{m\geq 0}
   \sum_{n=0}^{m}
   \frac{1}{m-n+1}
   \binom{m}{n}
   B_{n}(z)
   \frac{u^{m}}{m!}. \nonumber
\end{align}
(\ref{eq:found prop6})
\begin{align}
\sum_{m\geq 0}
   \sum_{i=0}^{N-1}
   B_{m}\left(z+\frac{i}{N}\right)
   \frac{u^{m}}{m!}
   &=
   \sum_{i=0}^{N-1}
   \frac{u}{e^{u}-1}
   e^{\left(z+\frac{i}{N}\right)u} \nonumber \\
   &=
   \frac{u}{e^{u}-1}
   e^{zu}
   \sum_{i=0}^{N-1}e^{\frac{i}{N}u} \nonumber \\
   &=
   \frac{u}{e^{u}-1}
   e^{zu}
   \frac{e^{u}-1}{e^{\frac{u}{N}}-1} \nonumber \\
   &=
   N\frac{\frac{u}{N}}{e^{\frac{u}{N}}-1}
   e^{Nz\frac{u}{N}} \nonumber \\
   &=
   N\sum_{m\geq 0}B_{m}(Nz)\frac{1}{m!}\left(\frac{u}{N}\right)^{m} \nonumber \\
   &=
   \sum_{m\geq 0}N^{1-m}B_{m}(Nz)\frac{u^{m}}{m!}. \nonumber
\end{align}
 (\ref{eq:found prop7})
\begin{align}
\sum_{m\geq 0}
   B_{m}(z+1)
   \frac{u^{m}}{m!}
   &=
   \frac{u}{e^{u}-1}
   e^{(z+1)u} \nonumber \\
   &=
   e^{u}\frac{u}{e^{u}-1}
   e^{zu} \nonumber \\
   &=
   \sum_{n\geq 0}B_{n}(z)e^{u}\frac{u^{n}}{n!} \nonumber \\
   &=
   \sum_{n\geq 0}B_{n}(z)
   \sum_{m\geq 0}
   \binom{m}{n}\frac{u^{m}}{m!} \nonumber \\
   &=
   \sum_{m\geq 0}
   \sum_{n\geq 0}
   \binom{m}{n}
   B_{n}(z)
   \frac{u^{m}}{m!}. \nonumber   
\end{align}

We remark that in the above proofs we only use the following formulas which are trivial results in the one variable case. \\
\noindent
\underline{{\bf{Pieri type formulas}}} 
For any nonnegative integer $m \in \mathbb{Z}$, 
\begin{align}
\label{eq:0key formula 4}
e^{u}\frac{u^{m}}{m!}
   &=
   \sum_{n\geq 0}
   \binom{n}{m}\frac{u^{n}}{n!}.
\end{align}
Since
$$
e^{u}=\sum_{N\geq 0}\frac{1}{N!}u^{N}
$$
and comparing the terms of degree $N+m$ in (\ref{eq:0key formula 4}), we have 
\begin{align}
\label{eq:0key formula 5}
\frac{u^{N}}{N!}\frac{u^{m}}{m!}
   &=
   \binom{m+N}{m}\frac{u^{m+N}}{(m+N)!}.
\end{align}
In particular, the $N=1$ case of (\ref{eq:0key formula 5}) is the following :  
\begin{align}
\label{eq:0Pieri 2}
u\frac{u^{m}}{m!}
   =
   \binom{m+1}{m}\frac{u^{m+1}}{(m+1)!}
   =
   (m+1)\frac{u^{m+1}}{(m+1)!}.
\end{align}
\underline{{\bf{Properties of $e^{zu}$}}} 
\begin{align}
\label{eq:0key formula 3}
\partial_{z}e^{zu}
   &=
   e^{zu}u. 
\end{align}
In particular, we obtain the index law of $e^{zu}$
\begin{align}
\label{eq:0key formula 2}
e^{(1+z)u}
   &=
   e^{\partial_{z}}
   e^{zu}
   =
   e^{zu}e^{u}.
\end{align}
\noindent
\underline{{\bf{Other formula (trivial!)}}} For any nonnegative integer $N \in \mathbb{Z}$, 
\begin{align}
\label{eq:0key formula 1}
u^{N}
   &=
   N!\frac{u^{N}}{N!}. 
\end{align}

On the other hand, a multivariate analogue of the formulas (\ref{eq:0key formula 4}) - (\ref{eq:0key formula 1}) has been studied (see Section\,2), which is non-trivial results unlike the one variable case. 
Therefore if we give a good multivariate analogue of Bernoulli polynomials which can be applied a multivariate analogue of (\ref{eq:0key formula 4}) - (\ref{eq:0key formula 1}), then we drive a multivariate analogue of (\ref{eq:found prop1}) - (\ref{eq:found prop7}).

In this article, we introduce a multivariate analogue of Bernoulli polynomials $B_{m}(z)$ by Jack polynomials and others, which we call ``{\it{multivariate Bernoulli polynomials}}''. 
We also provide a multivariate analogue of (\ref{eq:found prop1}) - (\ref{eq:found prop7}) based on a multivariate analogue of (\ref{eq:0key formula 4}) - (\ref{eq:0key formula 1}). 
Further, we consider a multivariate analogue of the multiple Bernoulli polynomials and give their fundamental properties.

The content of this article is as follows. 
In Section\,2, we introduce a multivariate analysis which is a natural generalization of special functions for matrix arguments. 
In particular, we explain a multivariate analogue of (\ref{eq:0key formula 4}) - (\ref{eq:0key formula 1}). 
Section\,3 is the main part of this article. 
In this section, we introduce multivariate Bernoulli polynomials by a generating function which is a natural multivariate analogue of (\ref{eq:def of B polynomials}), and give their fundamental properties. 
We also investigate a multivariate analogue of the multiple Bernoulli polynomials which is a multiple analogue of our multivariate Bernoulli polynomials in Section\,4.

\section{Preliminaries}
Refer to \cite{Ka}, \cite{Ko}, \cite{L}, \cite{M}, \cite{S}, \cite{VK} for the details in this section. 
Let $r \in \mathbb{Z}_{\geq 1}$, $d\in \mathbb{C}$ and  
\begin{align}
\mathcal{P}
   &:=
   \{\mathbf{m}=(m_{1},\ldots,m_{r}) \in \mathbb{Z}^{r} \mid m_{1}\geq \cdots \geq m_{r} \geq 0\}, \nonumber \\
\delta 
   &:=
   (r-1,r-2,\ldots ,2,1,0) \in \mathcal{P}, \nonumber \\
|\mathbf{z}|&:=z_{1}+\cdots +z_{r}, \nonumber \\
E_{k}(\mathbf{z})
   &:=
   \sum_{j=1}^{r}
   z_{j}^{k}\partial_{z_{j}} \quad (k \in \mathbb{Z}_{\geq 0}), \nonumber \\
D_{k}(\mathbf{z})
   &:=
   \sum_{j=1}^{r}
   z_{j}^{k}\partial_{z_{j}}^{2}
      +d 
      \sum_{1\leq j\not= l \leq r}
         \frac{z_{j}^{k}}{z_{j}-z_{l}}
         \partial_{z_{j}}
 \quad (k \in \mathbb{Z}_{\geq 0}). \nonumber
\end{align}
For any partition $\mathbf{m}=(m_{1},\ldots,m_{r}) \in \mathcal{P}$ and $\mathbf{z}=(z_{1},\ldots ,z_{r}) \in \mathbb{C}^{r}$, put 
$$
m_{\mathbf{m}}(\mathbf{z})
   :=
   \sum_{\mathbf{n} \in \mathfrak{S}_{r}\mathbf{m}}z^{\mathbf{n}}, 
$$
where $\mathfrak{S}_{r}$ is the symmetric group in $r$ letters and $\mathbf{z}^{\mathbf{n}}:=z_{1}^{n_{1}}\cdots z_{r}^{n_{r}}$. 
We define {\bf{Jack polynomials}} $P_{\mathbf{m}}\left(\mathbf{z};\frac{d}{2}\right)$ by the following two conditions.
\begin{align}
{\rm{(1)}} \,& 
D_{2}(\mathbf{z})P_{\mathbf{m}}\left(\mathbf{z};\frac{d}{2}\right)
   =
   P_{\mathbf{m}}\left(\mathbf{z};\frac{d}{2}\right)
   \sum_{j=1}^{r}m_{j}\left(m_{j}-1-d(r-j)\right), \nonumber \\
{\rm{(2)}} \,& 
P_{\mathbf{m}}\left(\mathbf{z};\frac{d}{2}\right)
   =
   m_{\mathbf{m}}(\mathbf{z})
   +\sum_{\mathbf{k}<\mathbf{m}}c_{\mathbf{m}\mathbf{k}}m_{\mathbf{k}}(\mathbf{z}). \nonumber 
\end{align}
Here, $<$ is the dominance partial ordering which is defined by 
$$
\mathbf{k}<\mathbf{m} \quad \Leftrightarrow \quad \mathbf{k}\not=\mathbf{m}, \quad 
\sum_{l=1}^{i}k_{l}\leq \sum_{l=1}^{i}m_{l} \quad i=1,\ldots, r.
$$
Similarly, {\bf{the shifted (or interpolation) Jack polynomials}} $P_{\mathbf{m}}^{\mathrm{ip}}\left(\mathbf{z};\frac{d}{2}\right)$
are defined by the following two conditions.
\begin{align}
{\rm{(1)}{}^{\mathrm{ip}}} \, &  
P_{\mathbf{k}}^{\mathrm{ip}}\left(\mathbf{m}+\frac{d}{2}\delta ;\frac{d}{2}\right)=0, \quad \text{unless $\mathbf{k} \subset \mathbf{m} \in \mathcal{P}$} \nonumber \\
{\rm{(2)}{}^{\mathrm{ip}}} \, &
P_{\mathbf{m}}^{\mathrm{ip}}\left(\mathbf{z};\frac{d}{2}\right)
   =
   P_{\mathbf{m}}\left(\mathbf{z};\frac{d}{2}\right)
   +\text{(lower terms)}. \nonumber
\end{align}
Further, we put 
\begin{align}
\Phi _{\mathbf{m}}^{(d)}(\mathbf{z})
   &:=
   \frac{P_{\mathbf{m}}\left(\mathbf{z};\frac{d}{2}\right)}{P_{\mathbf{m}}\left(\mathbf{1};\frac{d}{2}\right)}\,\, \text{(normalized Jack polynomials)},  \nonumber \\
\Psi _{\mathbf{m}}^{(d)}(\mathbf{z})
   &:=
   \frac{P_{\mathbf{m}}\left(\mathbf{z};\frac{d}{2}\right)}{P_{\mathbf{m}}^{\mathrm{ip}}\left(\mathbf{m}+\frac{d}{2}\delta ;\frac{d}{2}\right)} \nonumber
\end{align}
and 
\begin{align}
\binom{\mathbf{z}}{\mathbf{k}}^{(d)}
   &:=
   \frac{P_{\mathbf{k}}^{\mathrm{ip}}\left(\mathbf{z}+\frac{d}{2}\delta ;\frac{d}{2}\right)}{P_{\mathbf{k}}^{\mathrm{ip}}\left(\mathbf{k}+\frac{d}{2}\delta ;\frac{d}{2}\right)} \,\, \text{(generalized (or Jack) binomial coefficients)}, \nonumber \\
{_{0}\mathcal{F}_0}^{(d)}\left( \begin{matrix} \\ \end{matrix};\mathbf{z},\mathbf{u}\right)
   &:=
   \sum_{\mathbf{m} \in \mathcal{P}}
   \Psi_{\mathbf{m}}^{(d)}(\mathbf{z})\Phi_{\mathbf{m}}^{(d)}(\mathbf{u})
   =
   \sum_{\mathbf{m} \in \mathcal{P}}
   \Phi_{\mathbf{m}}^{(d)}(\mathbf{z})\Psi_{\mathbf{m}}^{(d)}(\mathbf{u}). \nonumber 
\end{align}
\noindent
\underline{{\bf{Special values}}} From \cite{M} V\!I (6.14), (10.20) and \cite{Ko} (4.8), we have 
\begin{align}
\label{eq:special value1}
P_{\mathbf{m}}\left(\mathbf{1};\frac{d}{2}\right)
   &=
   \prod_{(i,j) \in \mathbf{m}}
      \frac{j-1+\frac{d}{2}(r-i+1)}{m_{i}-j+\frac{d}{2}(m_{j}^{\prime}-i+1)}
   =
   \prod_{1\leq i<j\leq r}
         \frac{\left(\frac{d}{2}(j-i+1)\right)_{m_{i}-m_{j}}}{\left(\frac{d}{2}(j-i)\right)_{m_{i}-m_{j}}}.
\end{align}
Further, by \cite{Ko} (7.4) and (7.5)
\begin{align}
P_{\mathbf{m}}^{\mathrm{ip}}\left(\mathbf{m}+\frac{d}{2}\delta ;\frac{d}{2}\right)
   &=
   \prod_{(i,j) \in \mathbf{m}}
      \left(m_{i}-j+1+\frac{d}{2}(m_{j}^{\prime}-i)\right) \nonumber \\
\label{eq:special value2}
   &=
   \prod_{j=1}^{r}
   \left(\frac{d}{2}(r-j)+1\right)_{m_{j}}
   \prod_{1\leq i<j\leq r}
         \frac{\left(\frac{d}{2}(j-i-1)+1\right)_{m_{i}-m_{j}}}{\left(\frac{d}{2}(j-i)+1\right)_{m_{i}-m_{j}}}. 
\end{align}
Although these multivariate special functions are very complicated, we write down these functions explicitly in $r=1$, $r=2$ and $d=2$. \\
{\bf{The $r=1$ case}} For non positive integer $m$ and $z \in \mathbb{C}$, 
\begin{align}
P_{m}\left(z;\frac{d}{2}\right)
   &=
   z^{m}, \quad
P_{m}^{\mathrm{ip}}\left(z ;\frac{d}{2}\right)
   =
   \begin{cases}
   z(z-1)\cdots (z-m+1) & (m\not=0) \\
   1 & (m=0)
   \end{cases}. \nonumber       
\end{align}
Further, 
\begin{align}
P_{m}\left(1;\frac{d}{2}\right)
   &=
   1, \quad 
P_{m}^{\mathrm{ip}}\left(m ;\frac{d}{2}\right)
   =
   m!, \quad 
\Phi _{m}^{(d)}(z)
   =
   z^{m}, \quad 
\Psi _{m}^{(d)}(z)
   =
   \frac{z^{m}}{m!}, \nonumber \\
\binom{z}{k}^{(d)}
   &=
   \begin{cases}
   \frac{z(z-1)\cdots (z-k+1)}{k!} & (k\not=0) \\
   1 & (k=0)
   \end{cases}, \nonumber \\
{_{0}\mathcal{F}_0}^{(d)}\left( \begin{matrix} \\ \end{matrix};z,u\right)
   &=
   \sum_{m\geq 0}
   \frac{z^{m}}{m!}u^{m}
   =
   \sum_{m\geq 0}
   z^{m}\frac{u^{m}}{m!}
   =e^{zu}. \nonumber 
\end{align}
{\bf{The $r=2$ case}} (see \cite{Ko} 10.3, \cite{VK} 3.2.1) For any partition $\mathbf{m}=(m_{1},m_{2}) \in \mathcal{P}$ and $\mathbf{z}=(z_{1} ,z_{2}) \in \mathbb{C}^{2}$, 
\begin{align}
P_{\mathbf{m}}\left(\mathbf{z};\frac{d}{2}\right)
   &=
   z_{1}^{m_{1}}z_{2}^{m_{2}}
   {_{2}{F}_1}\left( \begin{matrix} -m_{1}+m_{2},\frac{d}{2} \\ 1-m_{1}+m_{2}-\frac{d}{2} \end{matrix};\frac{z_{2}}{z_{1}}\right) \nonumber \\
P_{\mathbf{m}}^{\mathrm{ip}}\left(\mathbf{z} ;\frac{d}{2}\right)
   &=
   (-1)^{m_{1}+m_{2}}
   (-z_{1})_{m_{2}}(-z_{2})_{m_{1}}
   {_{3}{F}_2}\left( \begin{matrix} -m_{1}+m_{2},\frac{d}{2}, -m_{1}+1-\frac{d}{2}+z_{1}\\ 1-m_{1}+m_{2}-\frac{d}{2},-m_{1}+1+z_{2} \end{matrix};1\right). \nonumber      
\end{align}
Further, 
\begin{align}
P_{\mathbf{m}}\left(\mathbf{1};\frac{d}{2}\right)
   &=
   \frac{(d)_{m_{1}-m_{2}}}{\left(\frac{d}{2}\right)_{m_{1}-m_{2}}} \nonumber \\
P_{\mathbf{m}}^{\mathrm{ip}}\left(\mathbf{m}+\frac{d}{2}\delta ;\frac{d}{2}\right)
   &=
   \frac{\left(\frac{d}{2}+1\right)_{m_{1}}m_{2}!(m_{1}-m_{2})!}{\left(\frac{d}{2}+1\right)_{m_{1}-m_{2}}}, \nonumber \\
\Phi _{\mathbf{m}}^{(d)}(\mathbf{z})
   &=
   \frac{\left(\frac{d}{2}\right)_{m_{1}-m_{2}}}{(d)_{m_{1}-m_{2}}}
   z_{1}^{m_{1}}z_{2}^{m_{2}}
   {_{2}{F}_1}\left( \begin{matrix} -m_{1}+m_{2},\frac{d}{2} \\ 1-m_{1}+m_{2}-\frac{d}{2} \end{matrix};\frac{z_{2}}{z_{1}}\right), \nonumber \\
\Psi _{\mathbf{m}}^{(d)}(\mathbf{z})
   &=
   \frac{\left(\frac{d}{2}+1\right)_{m_{1}-m_{2}}}{\left(\frac{d}{2}+1\right)_{m_{1}}(m_{1}-m_{2})!m_{2}!}
   z_{1}^{m_{1}}z_{2}^{m_{2}}
   {_{2}{F}_1}\left( \begin{matrix} -m_{1}+m_{2},\frac{d}{2} \\ 1-m_{1}+m_{2}-\frac{d}{2} \end{matrix};\frac{z_{2}}{z_{1}}\right), \nonumber \\
\binom{\mathbf{z}}{\mathbf{k}}^{(d)}
   &=
   \frac{\left(\frac{d}{2}+1\right)_{k_{1}-k_{2}}}{\left(\frac{d}{2}+1\right)_{k_{1}}(k_{1}-k_{2})!k_{2}!}
   (-1)^{k_{1}+k_{2}}\left(-z_{1}-\frac{d}{2}\right)_{k_{2}}(-z_{2})_{k_{1}} \nonumber \\
   & \quad \cdot 
   {_{3}{F}_2}\left( \begin{matrix} -k_{1}+k_{2},\frac{d}{2}, -k_{1}+1+z_{1}\\ 1-k_{1}+m_{2}-\frac{d}{2},-k_{1}+1+z_{2} \end{matrix};1\right), \nonumber \\
{_{0}\mathcal{F}_0}^{(d)}\left( \begin{matrix} \\ \end{matrix};\mathbf{z},\mathbf{u}\right)
   &=
   e^{z_{1}u_{1}+z_{2}u_{2}}
   {_{1}{F}_1}\left( \begin{matrix} \frac{d}{2} \\ d \end{matrix};-(z_{1}-z_{2})(u_{1}-u_{2})\right). \nonumber 
\end{align}

\noindent
{\bf{The $d=2$ case}} In this case, $P_{\mathbf{m}}\left(\mathbf{z};1\right)$ and $P_{\mathbf{m}}^{\mathrm{ip}}\left(\mathbf{z} ;1\right)$ are Schur polynomials and shifted Schur polynomials respectively \cite{OO}. 
\begin{align}
P_{\mathbf{m}}\left(\mathbf{z};1\right)
   &=
   s_{\mathbf{m}}(\mathbf{z})
   =
   \frac{\det\left(z_{i}^{m_{j}+r-j}\right)_{1\leq i,j\leq r}}{\Delta (\mathbf{z})}, \nonumber \\
P_{\mathbf{m}}^{\mathrm{ip}}\left(\mathbf{z} ;1\right)
   &=
   \frac{\det\left(P_{m_{j}+r-j}^{\mathrm{ip}}\left(z_{i}+r-i ;1\right)\right)_{1\leq i,j\leq r}}{\Delta (\mathbf{z})} \nonumber    
\end{align}
where $\Delta (\mathbf{z}):=\prod_{1\leq i<j\leq r}(u_{i}-u_{j})$. 
Further, 
\begin{align}
P_{\mathbf{m}}\left(\mathbf{1};1\right)
   &=
   s_{\mathbf{m}}(\mathbf{1})
   =
   \prod_{1\leq i<j\leq r}
         \frac{\left(j-i\right)_{m_{i}-m_{j}}}{\left(j-i+1\right)_{m_{i}-m_{j}}}, \nonumber \\
P_{\mathbf{m}}^{\mathrm{ip}}\left(\mathbf{m}+\frac{d}{2}\delta ;1\right)
   &=
   \prod_{j=1}^{r}
   \left(r-j+1\right)_{m_{j}}
   \prod_{1\leq i<j\leq r}
         \frac{(j-i )_{m_{i}-m_{j}}}{(j-i+1)_{m_{i}-m_{j}}}, \nonumber \\   
\Phi _{\mathbf{m}}^{(2)}(\mathbf{z})
   &=
   \prod_{1\leq i<j\leq r}
         \frac{\left(j-i+1\right)_{m_{i}-m_{j}}}{\left(j-i\right)_{m_{i}-m_{j}}}
   s_{\mathbf{m}}(\mathbf{z}), \nonumber \\
\Psi _{\mathbf{m}}^{(2)}(\mathbf{z})
   &=
   \prod_{j=1}^{r}
   \frac{1}{\left(r-j+1\right)_{m_{j}}}
   \prod_{1\leq i<j\leq r}
         \frac{\left(j-i+1\right)_{m_{i}-m_{j}}}{\left(j-i\right)_{m_{i}-m_{j}}}
   s_{\mathbf{m}}\left(\mathbf{z}\right), \nonumber \\
\binom{\mathbf{z}}{\mathbf{k}}^{(2)}
   &=
   \frac{1}{\Delta (\mathbf{z})}
   \det\left(\binom{z_{i}+r-i}{k_{j}+r-j}\right)_{1\leq i,j\leq r}, \quad 
{_{0}\mathcal{F}_0}^{(d)}\left( \begin{matrix} \\ \end{matrix};\mathbf{z},\mathbf{u}\right)
   =
   \frac{\det\left(e^{z_{i}u_{j}}\right)_{1\leq i,j\leq r}}{\Delta (\mathbf{z})\Delta (\mathbf{u})}. \nonumber    
\end{align}

\begin{rem}
We remark normalization of various Jack polynomials. 
First, we list some notations of Jack polynomials and their special values at $\mathbf{z}=\mathbf{1}$ (see Table\,1). 
In this article, our notations are based on \cite{FK}. 
In particular, 
\begin{align}
\Psi _{\mathbf{m}}^{(d)}(\mathbf{z})
   =
   d_{\mathbf{m}}
   \frac{1}{\left(\frac{n}{r}\right)_{\mathbf{m}}}
   \Phi _{\mathbf{m}}^{(d)}(\mathbf{z}), \nonumber 
\end{align}
where 
\begin{align}
n
   :=&
   r+\frac{d}{2}r(r-1), \nonumber \\
(\alpha )_{m}
   :=&
   \begin{cases}
   \alpha (\alpha +1) \cdots (\alpha +m-1) & (m \in \mathbb{Z}_{>0})\\
   1 & (m=0)
   \end{cases}, \nonumber \\
(\alpha )_{\mathbf{m}}
   :=&
   \prod_{j=1}^{r}\left(\alpha -\frac{d}{2}(j-1)\right)_{m_{j}}, \nonumber \\
d_{\mathbf{m}}
   :=&
      \prod_{1\leq i<j\leq r}
         \frac{m_{i}-m_{j}+\frac{d}{2}(j-i)}{\frac{d}{2}(j-i)}
         \frac{\left(\frac{d}{2}(j-i+1)\right)_{m_{i}-m_{j}}}{\left(\frac{d}{2}(j-i-1)+1\right)_{m_{i}-m_{j}}} \quad \text{(\cite{FK}, p315)}. \nonumber
\end{align}
From special values of Jack polynomials $P_{\mathbf{m}}\left(\mathbf{z};\frac{d}{2}\right)$ and interpolation Jack polynomials $P_{\mathbf{k}}^{\mathrm{ip}}\left(\mathbf{z}+\frac{d}{2}\delta ;\frac{d}{2}\right)$ (\ref{eq:special value1}) and (\ref{eq:special value2}), we have 
\begin{align}
d_{\mathbf{m}}
   \frac{1}{\left(\frac{n}{r}\right)_{\mathbf{m}}}
   =
   \frac{P_{\mathbf{m}}\left(\mathbf{1};\frac{d}{2}\right)}{P_{\mathbf{m}}^{\mathrm{ip}}\left(\mathbf{m}+\frac{d}{2}\delta ;\frac{d}{2}\right)}. \nonumber 
\end{align}
Next, we remark the relationship between Stanley style $J _{\mathbf{m}}^{\left(\frac{2}{d}\right)}(\mathbf{z})$ and Macdonald style $P_{\mathbf{m}}\left(\mathbf{z};\frac{d}{2}\right)$ 
$$
J _{\mathbf{m}}^{\left(\frac{2}{d}\right)}(\mathbf{z})
   =
   \left(\frac{2}{d}\right)^{|\mathbf{m}|}
   \prod_{(i,j) \in \mathbf{m}}
      \left(m_{i}-j+\frac{d}{2}(m_{j}^{\prime}-i+1)\right)
   P_{\mathbf{m}}\left(\mathbf{z};\frac{d}{2}\right) \quad \text{(\cite{M}\,V\!I\,(10.22))}
$$
where a partition $\mathbf{m}$ is identified with its diagram : 
$$
\mathbf{m}
   =\{s=(i,j)\mid 1\leq i\leq r,1\leq j\leq m_{i}\}.
$$
Hence we have
\begin{align}
\Psi _{\mathbf{m}}^{(d)}(\mathbf{z})
   &=
   \frac{P_{\mathbf{m}}\left(\mathbf{z};\frac{d}{2}\right)}{P_{\mathbf{m}}^{\mathrm{ip}}\left(\mathbf{m}+\frac{d}{2}\delta ;\frac{d}{2}\right)} \nonumber \\
   &=
   \left(\frac{d}{2}\right)^{|\mathbf{m}|}
   \prod_{(i,j) \in \mathbf{m}}
      \frac{1}{\left(m_{i}-j+\frac{d}{2}(m_{j}^{\prime}-i+1)\right)}    
   \frac{1}{P_{\mathbf{m}}^{\mathrm{ip}}\left(\mathbf{m}+\frac{d}{2}\delta ;\frac{d}{2}\right)}
   J _{\mathbf{m}}^{\left(\frac{2}{d}\right)}(\mathbf{z}). \nonumber
\end{align}
The relationship between $C _{\mathbf{m}}^{\left(\frac{2}{d}\right)}(\mathbf{1})$ (Kaneko style) and $\Psi _{\mathbf{m}}^{(d)}(\mathbf{1})$ (our style)
\begin{align}
C _{\mathbf{m}}^{\left(\frac{2}{d}\right)}(\mathbf{1})
   &=
   |\mathbf{m}|!\prod_{(i,j) \in \mathbf{m}}
      \frac{\left(j-1+\frac{d}{2}(r-i+1)\right)}{\left(m_{i}-j+\frac{d}{2}(m_{j}^{\prime}-i+1)\right)\left(m_{i}-j+1+\frac{d}{2}(m_{j}^{\prime}-i)\right)} \nonumber \\
   &=
      |\mathbf{m}|!\frac{P_{\mathbf{m}}\left(\mathbf{1};\frac{d}{2}\right)}{P_{\mathbf{m}}^{\mathrm{ip}}\left(\mathbf{m}+\frac{d}{2}\delta ;\frac{d}{2}\right)}. \nonumber
\end{align}
follows from (\ref{eq:special value1}), (\ref{eq:special value2}) and \cite{Ka} (18). 
Thus, we have 
\begin{align}
\Psi _{\mathbf{m}}^{(d)}(\mathbf{z})
   =
   \frac{1}{|\mathbf{m}|!}C_{\mathbf{m}}^{\left(\frac{2}{d}\right)}(\mathbf{z}). \nonumber 
\end{align}
To summarize the above results, we obtain 
\begin{align}
\Psi _{\mathbf{m}}^{(d)}(\mathbf{z})
   =
   d_{\mathbf{m}}
   \frac{1}{\left(\frac{n}{r}\right)_{\mathbf{m}}}
   \Phi _{\mathbf{m}}^{(d)}(\mathbf{z})
   =
   \frac{1}{P_{\mathbf{m}}^{\mathrm{ip}}\left(\mathbf{m}+\frac{d}{2}\delta ;\frac{d}{2}\right)}
   P_{\mathbf{m}}\left(\mathbf{z};\frac{d}{2}\right)
   =
   \frac{1}{|\mathbf{m}|!}C_{\mathbf{m}}^{\left(\frac{2}{d}\right)}(\mathbf{z}). 
\end{align}
\end{rem}

\begin{table}[htb]
\begin{center}
  \begin{tabular}{|c||c|c|} \hline
  & notation & special value at $\mathbf{z}=\mathbf{1}$ \\ \hline \hline 
   Faraut-Kor\'{a}nyi & $\Phi _{\mathbf{m}}^{(d)}(\mathbf{z})$ & $1$ \\ \hline
   Stanley & $J _{\mathbf{m}}^{\left(\frac{2}{d}\right)}(\mathbf{z})$ & $\left(\frac{2}{d}\right)^{|\mathbf{m}|}\prod_{(i,j) \in \mathbf{m}}\left(j-1+\frac{d}{2}(r-i+1)\right)$ (\cite{S}\,Thm.\,5.4) \\ \hline 
   Macdonald & $P_{\mathbf{m}}\left(\mathbf{z};\frac{d}{2}\right)$ & $\prod_{(i,j) \in \mathbf{m}}
      \frac{j-1+\frac{d}{2}(r-i+1)}{m_{i}-j+\frac{d}{2}(m_{j}^{\prime}-i+1)}$ (\cite{M}\,V\!I\,(10.20))\\ \hline 
   Kaneko & $C _{\mathbf{m}}^{\left(\frac{2}{d}\right)}(\mathbf{z})$ & $|\mathbf{m}|!\prod_{(i,j) \in \mathbf{m}}
      \frac{\left(j-1+\frac{d}{2}(r-i+1)\right)}{\left(m_{i}-j+\frac{d}{2}(m_{j}^{\prime}-i+1)\right)\left(m_{i}-j+1+\frac{d}{2}(m_{j}^{\prime}-i)\right)}$ (\cite{Ka}\,(18))\\ \hline 
   S & $\Psi _{\mathbf{m}}^{(d)}(\mathbf{z})$ & $\prod_{(i,j) \in \mathbf{m}}
      \frac{\left(j-1+\frac{d}{2}(r-i+1)\right)}{\left(m_{i}-j+\frac{d}{2}(m_{j}^{\prime}-i+1)\right)\left(m_{i}-j+1+\frac{d}{2}(m_{j}^{\prime}-i)\right)}$ \\ \hline
  \end{tabular}
\end{center}
\caption{Notations and normalizations of Jack polynomials}
\end{table}

Under the following, we provide all necessary formulas to prove our main results.\\
\noindent
\underline{{\bf{Pieri type formulas for Jack polynomials}}} 
For any partition $\mathbf{m} \in \mathcal{P}$, 
\begin{align}
\label{eq:key formula 4}
e^{|\mathbf{u}|}\Psi _{\mathbf{m}}^{(d)}(\mathbf{u})
   &=
   \sum_{\mathbf{n} \in \mathcal{P}}
   \binom{\mathbf{n}}{\mathbf{m}}^{(d)}\Psi _{\mathbf{n}}^{(d)}(\mathbf{u}) \quad (\text{\cite{L} Section\,14}).
\end{align}
Since
$$
e^{|\mathbf{u}|}=\sum_{N\geq 0}\frac{1}{N!}|\mathbf{u}|^{N}
$$
and comparing the terms of degree $N+|\mathbf{m}|$ in (\ref{eq:key formula 4}), we have 
\begin{align}
\label{eq:key formula 5}
\frac{|\mathbf{u}|^{N}}{N!}\Psi _{\mathbf{m}}^{(d)}(\mathbf{u})
   &=
   \sum_{\substack{|\mathbf{n}|-|\mathbf{m}|=N, \\ \mathbf{n} \in \mathcal{P}}}
   \binom{\mathbf{n}}{\mathbf{m}}^{(d)}\Psi _{\mathbf{n}}^{(d)}(\mathbf{u}).
\end{align}
From \cite{L} Section\,14, 
$$
\binom{\mathbf{m}^{i}}{\mathbf{m}}^{(d)}
   =
      \left(m_{i}+1+\frac{d}{2}(r-i)\right)
      h_{-,i}^{(d)}(\mathbf{m}^{i}), 
$$
where
$\epsilon_{i}:=(0,\ldots,0,\stackrel{i}{\stackrel{\vee}{1}},0,\ldots,0) \in \mathbb{Z}^{r}$, 
$\mathbf{m}^{i}:=\mathbf{m}+\epsilon_{i}$ and  
\begin{align}
h_{\pm ,i}^{(d)}(\mathbf{m})
   &:=
      \prod_{1\leq k\not=i\leq r}\frac{m_{i}-m_{k}-\frac{d}{2}(i-k)\pm \frac{d}{2}}{m_{i}-m_{k}-\frac{d}{2}(i-k)}. \nonumber 
\end{align}
In particular, the $N=1$ case of (\ref{eq:key formula 5}) is the following :  
\begin{align}
\label{eq:Pieri 2}
|\mathbf{u}|\Psi_{\mathbf{m}}^{(d)}(\mathbf{u})
   &=
      \sum_{\substack{1\leq i\leq r, \\ \mathbf{m}^{i} \in \mathcal{P}}}
      \binom{\mathbf{m}^{i}}{\mathbf{m}}^{(d)}
         \Psi_{\mathbf{m}^{i}}^{(d)}(\mathbf{u})
   =   
      \sum_{\substack{1\leq i\leq r, \\ \mathbf{m}^{i} \in \mathcal{P}}}
         \Psi_{\mathbf{m}^{i}}^{(d)}(\mathbf{u})
         \left(m_{i}+1+\frac{d}{2}(r-i)\right)
         h_{-,i}^{(d)}(\mathbf{m}^{i}).  
\end{align}
\underline{{\bf{Properties of ${_{0}\mathcal{F}_0}^{(d)}$}}} By \cite{L} Section\,14, we have 
\begin{align}
\label{eq:key formula 3}
E_{0}(\mathbf{z}){_{0}\mathcal{F}_0}^{(d)}\left( \begin{matrix} \\ \end{matrix};\mathbf{z},\mathbf{u}\right)
   &=
   {_{0}\mathcal{F}_0}^{(d)}\left( \begin{matrix} \\ \end{matrix};\mathbf{z},\mathbf{u}\right)|\mathbf{u}|. 
\end{align}
In particular, we obtain the index law of ${_{0}\mathcal{F}_0}^{(d)}\left( \begin{matrix} \\ \end{matrix};\mathbf{z},\mathbf{u}\right)$
\begin{align}
\label{eq:key formula 2}
{_{0}\mathcal{F}_0}^{(d)}\left( \begin{matrix} \\ \end{matrix};\mathbf{1}+\mathbf{z},\mathbf{u}\right)
   &=
   e^{E_{0}(\mathbf{z})}
   {_{0}\mathcal{F}_0}^{(d)}\left( \begin{matrix} \\ \end{matrix};\mathbf{z},\mathbf{u}\right)
   =
   {_{0}\mathcal{F}_0}^{(d)}\left( \begin{matrix} \\ \end{matrix};\mathbf{z},\mathbf{u}\right)
   e^{|\mathbf{u}|}.
\end{align}
\noindent
\underline{{\bf{Other formula}}}
\begin{align}
\label{eq:key formula 1}
|\mathbf{u}|^{N}
   &=
   N!\sum_{\substack{|\mathbf{m}|=N, \\ \mathbf{m} \in \mathcal{P}}}
   \Psi _{\mathbf{m}}^{(d)}(\mathbf{u}) \quad (\text{\cite{S}\,Prop.\,2.3 or \cite{Ka}\,(17)}).
\end{align}
To summarize the above results, we obtain the following dictionary. 
\begin{align}
P_{m}\left(1;\frac{d}{2}\right)=1 
   \quad &\Rightarrow \quad 
   P_{\mathbf{m}}\left(\mathbf{1};\frac{d}{2}\right)
   =
   \prod_{1\leq i<j\leq r}
         \frac{\left(\frac{d}{2}(j-i+1)\right)_{m_{i}-m_{j}}}{\left(\frac{d}{2}(j-i)\right)_{m_{i}-m_{j}}}, \nonumber \\
P_{m}^{\mathrm{ip}}\left(m ;\frac{d}{2}\right)=m!         
   \quad &\Rightarrow \quad 
P_{\mathbf{m}}^{\mathrm{ip}}\left(\mathbf{m}+\frac{d}{2}\delta ;\frac{d}{2}\right)
   =
   \prod_{j=1}^{r}
   \left(\frac{d}{2}(r-j)+1\right)_{m_{j}}
   \nonumber \\
   & \quad \quad \quad \quad \cdot 
   \prod_{1\leq i<j\leq r}\!\!\!\!\!
         \frac{\left(\frac{d}{2}(j-i-1)+1\right)_{m_{i}-m_{j}}}{\left(\frac{d}{2}(j-i)+1\right)_{m_{i}-m_{j}}}, \nonumber \\
\Phi _{m}^{(d)}(z):=z^{m} \quad &\Rightarrow \quad \Phi _{\mathbf{m}}^{(d)}(\mathbf{z}):=\frac{P_{\mathbf{m}}\left(\mathbf{z} ;\frac{d}{2}\right)}{P_{\mathbf{m}}\left(\mathbf{1} ;\frac{d}{2}\right)}, \nonumber \\
\Psi _{m}^{(d)}(z):=\frac{z^{m}}{m!} \quad &\Rightarrow \quad \Psi _{\mathbf{m}}^{(d)}(\mathbf{z})
   :=
   \frac{P_{\mathbf{m}}\left(\mathbf{1};\frac{d}{2}\right)\Phi _{\mathbf{m}}^{(d)}(\mathbf{z})}{P_{\mathbf{m}}^{\mathrm{ip}}\left(\mathbf{m}+\frac{d}{2}\delta ;\frac{d}{2}\right)}
   =
   \frac{P_{\mathbf{m}}\left(\mathbf{z};\frac{d}{2}\right)}{P_{\mathbf{m}}^{\mathrm{ip}}\left(\mathbf{m}+\frac{d}{2}\delta ;\frac{d}{2}\right)}, \nonumber \\
\binom{m}{k}:=\frac{P_{k}^{\mathrm{ip}}\left(m ;\frac{d}{2}\right)}{P_{k}^{\mathrm{ip}}\left(k ;\frac{d}{2}\right)}
 \quad &\Rightarrow \quad 
   \binom{\mathbf{m}}{\mathbf{k}}^{(d)}
   :=
   \frac{P_{\mathbf{k}}^{\mathrm{ip}}\left(\mathbf{m}+\frac{d}{2}\delta ;\frac{d}{2}\right)}{P_{\mathbf{k}}^{\mathrm{ip}}\left(\mathbf{k}+\frac{d}{2}\delta ;\frac{d}{2}\right)}, \nonumber \\
e^{zu}
   =
   \sum_{m=0}^{\infty}
   \frac{1}{m!}z^{m}u^{m}
 \quad &\Rightarrow \quad 
{_{0}\mathcal{F}_0}^{(d)}\left( \begin{matrix} \\ \end{matrix};\mathbf{z},\mathbf{u}\right)
   :=
   \sum_{\mathbf{m} \in \mathcal{P}}
   \Psi_{\mathbf{k}}^{(d)}(\mathbf{z})\Phi_{\mathbf{k}}^{(d)}(\mathbf{u}) \nonumber \\
e^{u}\frac{u^{m}}{m!}=\sum_{n=0}^{\infty}\binom{n}{m}\frac{u^{n}}{n!}
   \quad &\Rightarrow \quad 
e^{|\mathbf{u}|}\Psi _{\mathbf{m}}^{(d)}(\mathbf{u})
   =
   \sum_{\mathbf{n} \in \mathcal{P}}
   \binom{\mathbf{n}}{\mathbf{m}}^{(d)}\Psi _{\mathbf{n}}^{(d)}(\mathbf{u}), \nonumber \\
\frac{u^{N}}{N!}\frac{u^{m}}{m!}=\binom{N+m}{m}\frac{u^{N+m}}{(N+m)!}
   \quad &\Rightarrow \quad 
\frac{|\mathbf{u}|^{N}}{N!}\Psi _{\mathbf{m}}^{(d)}(\mathbf{u})
   =
   \sum_{\substack{|\mathbf{n}|-|\mathbf{m}|=N, \\ \mathbf{n} \in \mathcal{P}}}
   \binom{\mathbf{n}}{\mathbf{m}}^{(d)}\Psi _{\mathbf{n}}^{(d)}(\mathbf{u}), \nonumber \\
u\frac{u^{m}}{m!}=\frac{u^{m+1}}{(m+1)!}(m+1)
   \quad &\Rightarrow \quad 
|\mathbf{u}|\Psi_{\mathbf{m}}^{(d)}(\mathbf{u}) 
   =
      \sum_{\substack{1\leq i\leq r, \\ \mathbf{m}^{i} \in \mathcal{P}}}
         \Psi_{\mathbf{m}^{i}}^{(d)}(\mathbf{u})
         \left(m_{i}+1+\frac{d}{2}(r-i)\right)
         h_{-,i}^{(d)}(\mathbf{m}^{i}), \nonumber \\
\partial_{z}e^{zu}=e^{zu}u
   \quad &\Rightarrow \quad 
E_{0}(\mathbf{z}){_{0}\mathcal{F}_0}^{(d)}\left( \begin{matrix} \\ \end{matrix};\mathbf{z},\mathbf{u}\right)
   =
   {_{0}\mathcal{F}_0}^{(d)}\left( \begin{matrix} \\ \end{matrix};\mathbf{z},\mathbf{u}\right)|\mathbf{u}|, \nonumber \\
e^{(1+z)u}=e^{u}e^{zu}
   \quad &\Rightarrow \quad 
{_{0}\mathcal{F}_0}^{(d)}\left( \begin{matrix} \\ \end{matrix};\mathbf{1}+\mathbf{z},\mathbf{u}\right)
   =
   e^{|\mathbf{u}|}
   {_{0}\mathcal{F}_0}^{(d)}\left( \begin{matrix} \\ \end{matrix};\mathbf{z},\mathbf{u}\right), \nonumber \\
u^{N}=N!\frac{u^{N}}{N!}
   \quad &\Rightarrow \quad 
|\mathbf{u}|^{N}
   =
   N!\sum_{\substack{|\mathbf{m}|=N, \\ \mathbf{m} \in \mathcal{P}}}
   \Psi _{\mathbf{m}}^{(d)}(\mathbf{u}). \nonumber
\end{align}

\section{Multivariate Bernoulli polynomials}
We define multivariate Bernoulli polynomials $B_{\mathbf{m}}^{(d)}(\mathbf{z})$ or $B_{\mathbf{m}}\left(\mathbf{z};\frac{d}{2}\right)$ by the following generating function. 
\begin{align}
\frac{u}{e^{u}-1}e^{zu}
   &=
   \sum_{m=0}^{\infty}
   B_{m}(z)\Psi _{m}(u) \quad (|u|<2\pi )\nonumber \\
   & \quad \quad \Downarrow \quad \nonumber \\
\label{eq:def of multivariate Bernoulli}
\frac{|\mathbf{u}|}{e^{|\mathbf{u}|}-1}
   {_{0}\mathcal{F}_0}^{(d)}\left( \begin{matrix} \\ \end{matrix};\mathbf{z},\mathbf{u}\right)
   &=
   \sum_{\mathbf{m} \in \mathcal{P}}
      B_{\mathbf{m}}^{(d)}\left(\mathbf{z}\right)
      \Psi _{\mathbf{m}}^{(d)}(\mathbf{u}) \quad (|u_{1}+\cdots +u_{r}|<2\pi ).
\end{align}
\begin{rem}
Originally, we consider the following type generating function and multivariate analogue of Bernoulli polynomials. 
$$
\prod_{j=1}^{r}\frac{u_{j}}{e^{u_{j}}-1}
   {_{0}\mathcal{F}_0}^{(d)}\left( \begin{matrix} \\ \end{matrix};\mathbf{z},\mathbf{u}\right)
   =
   \sum_{\mathbf{m} \in \mathcal{P}}
      \widetilde{B}_{\mathbf{m}}^{(d)}\left(\mathbf{z}\right)
      \Psi _{\mathbf{m}}^{(d)}(\mathbf{u})
$$
In the $d=2$ case, this type generating function has the determinant expression 
$$
\prod_{j=1}^{r}\frac{u_{j}}{e^{u_{j}}-1}
   {_{0}\mathcal{F}_0}^{(2)}\left( \begin{matrix} \\ \end{matrix};\mathbf{z},\mathbf{u}\right)
   =
   \frac{\det\left(\frac{u_{j}e^{z_{i}u_{j}}}{e^{u_{j}}-1}\right)_{1\leq i,j\leq r}}{\Delta (\mathbf{z})\Delta (\mathbf{u})}
$$
and $\widetilde{B}_{\mathbf{m}}^{(2)}\left(\mathbf{z}\right)$ has the Jacobi-Trudi type formula
$$
\widetilde{B}_{\mathbf{m}}^{(2)}\left(\mathbf{z}\right)
   =
   \frac{\det\left(B_{m_{i}+r-i}(z_{j})\right)}{\Delta (\mathbf{z})}.
$$
However, for this multivariate analogue of Bernoulli polynomials, we can not find an analogue of the formulas (\ref{eq:found prop1}) - (\ref{eq:found prop7}). 
Therefore, we investigate the above type (\ref{eq:def of multivariate Bernoulli}) multivariate Bernoulli polynomials. 
\end{rem}
\begin{thm}
{\rm{(1)}} Special value at $\mathbf{z}=0$
\begin{align}
B_{\mathbf{m}}^{(d)}\left(\mathbf{0}\right)
   =
   B_{|\mathbf{m}|}.
\end{align}
{\rm{(2)}} Difference equation 
\begin{align}
B_{\mathbf{m}}^{(d)}\left(\mathbf{z}+\mathbf{1}\right)-B_{\mathbf{m}}^{(d)}\left(\mathbf{z}\right)
   =
   \sum_{i=1}^{r}
      \Phi_{\mathbf{m}_{i}}^{(d)}(\mathbf{z})
      \left(m_{i}+\frac{d}{2}(r-i)\right)
      h_{-,i}^{(d)}(\mathbf{m}).
\end{align}
{\rm{(3)}} Differential equation 
\begin{align}
E_{0}(\mathbf{z})B_{\mathbf{m}}^{(d)}\left(\mathbf{z}\right)
   =
      \sum_{i=1}^{r}
      B_{\mathbf{m}_{i}}^{(d)}\left(\mathbf{z}\right)
         \left(m_{i}+\frac{d}{2}(r-i)\right)
         h_{-,i}^{(d)}(\mathbf{m}).
\end{align}
{\rm{(4)}} Symmetry
\begin{align}
B_{\mathbf{m}}^{(d)}(\mathbf{1}-\mathbf{z})
   =
   (-1)^{|\mathbf{m}|}B_{\mathbf{m}}^{(d)}(\mathbf{z})
\end{align}
{\rm{(5)}} Explicit formula
\begin{align}
B_{\mathbf{m}}^{(d)}(\mathbf{z})
   =
   \sum_{\mathbf{n}\subset \mathbf{m}}
   B_{|\mathbf{m}|-|\mathbf{n}|}
   \binom{\mathbf{m}}{\mathbf{n}}^{(d)}
   \Phi _{\mathbf{n}}^{(d)}(\mathbf{z}).
\end{align}
{\rm{(6)}} Inversion formula 
\begin{align}
\Phi _{\mathbf{m}}^{(d)}(\mathbf{z})
   =
   \sum_{\mathbf{n}\subset \mathbf{m}}
   \frac{1}{|\mathbf{m}|-|\mathbf{n}|+1}
   \binom{\mathbf{m}}{\mathbf{n}}^{(d)}
   B_{\mathbf{n}}^{(d)}(\mathbf{z}). 
\end{align}
{\rm{(7)}} Multiplication formula
\begin{align}
\sum_{i=0}^{N-1}
   B_{\mathbf{m}}^{(d)}\left(\mathbf{z}+\frac{i}{N}\mathbf{1}\right)
   &=
   N^{1-|\mathbf{m}|}B_{\mathbf{m}}^{(d)}(N\mathbf{z})
\end{align}
{\rm{(8)}} Binomial formula
\begin{align}
B_{\mathbf{m}}^{(d)}(\mathbf{z}+\mathbf{1})
   &=
\sum_{\mathbf{n} \subset \mathbf{m}}
   \binom{\mathbf{m}}{\mathbf{n}}^{(d)}
   B_{\mathbf{n}}^{(d)}(\mathbf{z})
\end{align}
\end{thm}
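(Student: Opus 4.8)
\emph{Strategy.} Each of the eight identities is obtained by translating, essentially verbatim, the corresponding one-variable computation displayed in the introduction, according to the dictionary of Section~2: one replaces $e^{zu}$ by ${_{0}\mathcal{F}_0}^{(d)}(\mathbf{z},\mathbf{u})$, $u^{m}/m!$ by $\Psi_{\mathbf{m}}^{(d)}(\mathbf{u})$, $z^{m}$ by $\Phi_{\mathbf{m}}^{(d)}(\mathbf{z})$, and $\binom{m}{n}$ by $\binom{\mathbf{m}}{\mathbf{n}}^{(d)}$, and then invokes the multivariate Pieri rules (\ref{eq:key formula 4}), (\ref{eq:key formula 5}), (\ref{eq:Pieri 2}), the expansion (\ref{eq:key formula 1}) of $|\mathbf{u}|^{N}$, and the index law (\ref{eq:key formula 3})--(\ref{eq:key formula 2}) for ${_{0}\mathcal{F}_0}^{(d)}$, in place of their trivial one-variable counterparts. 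In every case I manipulate the left-hand side of the generating function (\ref{eq:def of multivariate Bernoulli}) into the form $\sum_{\mathbf{m}\in\mathcal{P}}(\ast_{\mathbf{m}})\,\Psi_{\mathbf{m}}^{(d)}(\mathbf{u})$ and then read off $B_{\mathbf{m}}^{(d)}$ by matching coefficients. This last step is legitimate because $\{\Psi_{\mathbf{m}}^{(d)}\}_{\mathbf{m}\in\mathcal{P}}$ is a normalization of the Jack basis of the ring of symmetric functions, and (\ref{eq:def of multivariate Bernoulli}) is the convergent Taylor expansion, grouped by total degree $|\mathbf{m}|$, of an analytic function on $\{|u_{1}+\cdots+u_{r}|<2\pi\}$.

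\emph{Parts (1), (4), (5), (6), (8).} For (1): set $\mathbf{z}=\mathbf{0}$, use ${_{0}\mathcal{F}_0}^{(d)}(\mathbf{0},\mathbf{u})=1$ together with $|\mathbf{u}|/(e^{|\mathbf{u}|}-1)=\sum_{N\ge 0}B_{N}|\mathbf{u}|^{N}/N!$, and expand each $|\mathbf{u}|^{N}/N!$ via (\ref{eq:key formula 1}). For (4): write $\mathbf{1}-\mathbf{z}=\mathbf{1}+(-\mathbf{z})$, apply the index law (\ref{eq:key formula 2}), use homogeneity of $\Phi_{\mathbf{m}}^{(d)}$ and $\Psi_{\mathbf{m}}^{(d)}$ (degree $|\mathbf{m}|$) to get ${_{0}\mathcal{F}_0}^{(d)}(-\mathbf{z},\mathbf{u})={_{0}\mathcal{F}_0}^{(d)}(\mathbf{z},-\mathbf{u})$, and note $|\mathbf{u}|e^{|\mathbf{u}|}/(e^{|\mathbf{u}|}-1)=-|\mathbf{u}|/(e^{-|\mathbf{u}|}-1)$, so that the left-hand side becomes the defining series evaluated at $-\mathbf{u}$, i.e.\ $\sum_{\mathbf{m}}(-1)^{|\mathbf{m}|}B_{\mathbf{m}}^{(d)}(\mathbf{z})\Psi_{\mathbf{m}}^{(d)}(\mathbf{u})$. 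For (5) and (6): write the left-hand side of the asserted identity as a product of $\sum_{N}(\text{scalar})\,|\mathbf{u}|^{N}/N!$ with the appropriate $\Psi^{(d)}$-expansion --- using $|\mathbf{u}|/(e^{|\mathbf{u}|}-1)=\sum_{N}B_{N}|\mathbf{u}|^{N}/N!$ for (5), and $(e^{|\mathbf{u}|}-1)/|\mathbf{u}|=\sum_{N}\frac{1}{N+1}|\mathbf{u}|^{N}/N!$ applied to the Bernoulli series for (6) --- and collapse the product by (\ref{eq:key formula 5}); the vanishing $\binom{\mathbf{n}}{\mathbf{m}}^{(d)}=0$ unless $\mathbf{m}\subset\mathbf{n}$ (condition ${\rm(1)}^{\mathrm{ip}}$) automatically restricts the inner sum to the shape stated. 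For (8): use the index law to absorb the extra factor $e^{|\mathbf{u}|}$, writing the $B_{\mathbf{m}}^{(d)}(\mathbf{z}+\mathbf{1})$-series as $e^{|\mathbf{u}|}\sum_{\mathbf{n}}B_{\mathbf{n}}^{(d)}(\mathbf{z})\Psi_{\mathbf{n}}^{(d)}(\mathbf{u})$, and expand $e^{|\mathbf{u}|}\Psi_{\mathbf{n}}^{(d)}(\mathbf{u})$ by (\ref{eq:key formula 4}).

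\emph{Parts (2), (3) and (7); main obstacle.} For (2) (resp.\ (3)) apply to (\ref{eq:def of multivariate Bernoulli}) the operator ``shift $\mathbf{z}\mapsto\mathbf{z}+\mathbf{1}$ and subtract'' (resp.\ $E_{0}(\mathbf{z})$); by the index law in the form (\ref{eq:key formula 2}) (resp.\ by (\ref{eq:key formula 3})) the Bernoulli factor cancels, leaving $|\mathbf{u}|\,{_{0}\mathcal{F}_0}^{(d)}(\mathbf{z},\mathbf{u})=\sum_{\mathbf{m}}F_{\mathbf{m}}(\mathbf{z})\,|\mathbf{u}|\,\Psi_{\mathbf{m}}^{(d)}(\mathbf{u})$ with $F_{\mathbf{m}}=\Phi_{\mathbf{m}}^{(d)}$ (resp.\ $F_{\mathbf{m}}=B_{\mathbf{m}}^{(d)}$); one then applies (\ref{eq:Pieri 2}) and reindexes by $\mathbf{n}=\mathbf{m}^{i}=\mathbf{m}+\epsilon_{i}$, so that $\mathbf{m}=\mathbf{n}-\epsilon_{i}=\mathbf{n}_{i}$, $m_{i}+1=n_{i}$ and $h_{-,i}^{(d)}(\mathbf{m}^{i})=h_{-,i}^{(d)}(\mathbf{n})$, obtaining $F_{\mathbf{m}}(\mathbf{z}+\mathbf{1})-F_{\mathbf{m}}(\mathbf{z})=\sum_{i:\,\mathbf{m}_{i}\in\mathcal{P}}\big(m_{i}+\tfrac{d}{2}(r-i)\big)h_{-,i}^{(d)}(\mathbf{m})\,F_{\mathbf{m}_{i}}(\mathbf{z})$, and similarly for $E_{0}(\mathbf{z})B_{\mathbf{m}}^{(d)}$. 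To recover the summation range $1\le i\le r$ of the statement one checks that $\big(m_{i}+\tfrac{d}{2}(r-i)\big)h_{-,i}^{(d)}(\mathbf{m})$ vanishes whenever $\mathbf{m}_{i}\notin\mathcal{P}$: if $m_{i}=m_{i+1}$ the factor of $h_{-,i}^{(d)}(\mathbf{m})$ indexed by $k=i+1$ is $0$, while if $i=r$ and $m_{r}=0$ the prefactor $m_{r}+\tfrac{d}{2}(r-r)$ is $0$. For (7) the one ingredient not printed verbatim in Section~2 is the scalar-shift index law ${_{0}\mathcal{F}_0}^{(d)}(\mathbf{z}+c\mathbf{1},\mathbf{u})=e^{c|\mathbf{u}|}\,{_{0}\mathcal{F}_0}^{(d)}(\mathbf{z},\mathbf{u})$, which follows from (\ref{eq:key formula 3}) exactly as its $c=1$ case does (apply $e^{cE_{0}(\mathbf{z})}$ and use $E_{0}(\mathbf{z})^{k}{_{0}\mathcal{F}_0}^{(d)}={_{0}\mathcal{F}_0}^{(d)}|\mathbf{u}|^{k}$), together with the scaling relation ${_{0}\mathcal{F}_0}^{(d)}(N\mathbf{z},\mathbf{v})={_{0}\mathcal{F}_0}^{(d)}(\mathbf{z},N\mathbf{v})$ (again homogeneity); then $\sum_{i=0}^{N-1}{_{0}\mathcal{F}_0}^{(d)}(\mathbf{z}+\tfrac{i}{N}\mathbf{1},\mathbf{u})={_{0}\mathcal{F}_0}^{(d)}(\mathbf{z},\mathbf{u})\cdot\frac{e^{|\mathbf{u}|}-1}{e^{|\mathbf{u}|/N}-1}$, the Bernoulli factor telescopes to $N\cdot\frac{|\mathbf{u}|/N}{e^{|\mathbf{u}|/N}-1}$, and the substitution $\mathbf{v}=\tfrac{1}{N}\mathbf{u}$ with $\Psi_{\mathbf{m}}^{(d)}(\tfrac{1}{N}\mathbf{u})=N^{-|\mathbf{m}|}\Psi_{\mathbf{m}}^{(d)}(\mathbf{u})$ produces $N^{1-|\mathbf{m}|}B_{\mathbf{m}}^{(d)}(N\mathbf{z})$. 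I expect the real care to be concentrated in (2)--(3): getting the index shift right and verifying that the $\mathbf{m}_{i}\notin\mathcal{P}$ terms are indeed annihilated, so that the clean range $1\le i\le r$ is correct; once the dictionary of Section~2 is available, everything else is mechanical.
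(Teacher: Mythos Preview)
Your proposal is correct and follows essentially the same generating-function approach as the paper, invoking exactly the same tools (\ref{eq:key formula 4})--(\ref{eq:key formula 1}) at the same places. Your additional remarks---that the $\mathbf{m}_{i}\notin\mathcal{P}$ terms in (2)--(3) vanish automatically, and that (7) requires the scalar-shift version of the index law---are in fact details the paper passes over silently, so your write-up is if anything more careful than the original.
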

\begin{proof}
{\rm{(1)}} By the definition of the multivariate Bernoulli polynomials and Bernoulli numbers, we have
\begin{align}
\sum_{\mathbf{m} \in \mathcal{P}}
   B_{\mathbf{m}}^{(d)}\left(\mathbf{0}\right)
   \Psi _{\mathbf{m}}^{(d)}(\mathbf{u})
   =
   \frac{|\mathbf{u}|}{e^{|\mathbf{u}|}-1}
   =
   \sum_{N=0}^{\infty}
      \frac{B_{N}}{N!}|\mathbf{u}|^{N}. \nonumber
\end{align}
On the other hand, by (\ref{eq:key formula 1})
\begin{align}
   \sum_{N=0}^{\infty}
      \frac{B_{N}}{N!}|\mathbf{u}|^{N}
   =
\sum_{N=0}^{\infty}
      B_{N}
   \sum_{|\mathbf{m}|=N}
   \Psi _{\mathbf{m}}^{(d)}(\mathbf{u})
   =
\sum_{\mathbf{m} \in \mathcal{P}}
   B_{|\mathbf{m}|}
   \Psi _{\mathbf{m}}^{(d)}(\mathbf{u}). \nonumber   
\end{align}
{\rm{(2)}} By (\ref{eq:key formula 2}) and (\ref{eq:Pieri 2}), we have
\begin{align}
\sum_{\mathbf{m} \in \mathcal{P}}
      \left(B_{\mathbf{m}}^{(d)}\left(\mathbf{z}+\mathbf{1}\right)-B_{\mathbf{m}}^{(d)}\left(\mathbf{z}\right)\right)
      \Psi _{\mathbf{m}}^{(d)}(\mathbf{z})
   &=
   \frac{|\mathbf{u}|}{e^{|\mathbf{u}|}-1}
   \left({_{0}\mathcal{F}_0}^{(d)}\left( \begin{matrix} \\ \end{matrix};\mathbf{z}+\mathbf{1},\mathbf{u}\right)-{_{0}\mathcal{F}_0}^{(d)}\left( \begin{matrix} \\ \end{matrix};\mathbf{z},\mathbf{u}\right)\right) \nonumber \\
   &=
   \frac{|\mathbf{u}|}{e^{|\mathbf{u}|}-1}
   \left(e^{|\mathbf{u}|}{_{0}\mathcal{F}_0}^{(d)}\left( \begin{matrix} \\ \end{matrix};\mathbf{z},\mathbf{u}\right)-{_{0}\mathcal{F}_0}^{(d)}\left( \begin{matrix} \\ \end{matrix};\mathbf{z},\mathbf{u}\right)\right) \nonumber \\
   &=
   |\mathbf{u}|{_{0}\mathcal{F}_0}^{(d)}\left( \begin{matrix} \\ \end{matrix};\mathbf{z},\mathbf{u}\right) \nonumber \\
   &=
      \sum_{\mathbf{m} \in \mathcal{P}}
   \Phi_{\mathbf{m}}^{(d)}(\mathbf{z})|\mathbf{u}|\Psi_{\mathbf{m}}^{(d)}(\mathbf{u}) \nonumber \\
   &=
      \sum_{\mathbf{m} \in \mathcal{P}}
         \Phi_{\mathbf{m}}^{(d)}(\mathbf{z})
         \sum_{i=1}^{r}
         \Psi_{\mathbf{m}^{i}}^{(d)}(\mathbf{u})
         \left(m_{i}+1+\frac{d}{2}(r-i)\right)
         h_{-,i}^{(d)}(\mathbf{m}^{i}) \nonumber \\
   &=
      \sum_{\mathbf{m} \in \mathcal{P}}
      \sum_{i=1}^{r}
         \Phi_{\mathbf{m}_{i}}^{(d)}(\mathbf{z})
         \left(m_{i}+\frac{d}{2}(r-i)\right)
         h_{-,i}^{(d)}(\mathbf{m})
         \Psi_{\mathbf{m}}^{(d)}(\mathbf{u}). \nonumber         
\end{align}
{\rm{(3)}} By (\ref{eq:key formula 3}) and (\ref{eq:Pieri 2}), 
\begin{align}
\sum_{\mathbf{m} \in \mathcal{P}}
   E_{0}(\mathbf{z})B_{\mathbf{m}}^{(d)}\left(\mathbf{z}\right)
      \Psi _{\mathbf{m}}^{(d)}(\mathbf{u})
   &=
\frac{|\mathbf{u}|}{e^{|\mathbf{u}|}-1}
E_{0}(\mathbf{z})
   {_{0}\mathcal{F}_0}^{(d)}\left( \begin{matrix} \\ \end{matrix};\mathbf{z},\mathbf{u}\right) \nonumber \\
   &=
\frac{|\mathbf{u}|}{e^{|\mathbf{u}|}-1}
   {_{0}\mathcal{F}_0}^{(d)}\left( \begin{matrix} \\ \end{matrix};\mathbf{z},\mathbf{u}\right)
   |\mathbf{u}| \nonumber \\
   &=
   \sum_{\mathbf{m} \in \mathcal{P}}
      B_{\mathbf{m}}^{(d)}\left(\mathbf{z}\right)
   |\mathbf{u}|\Psi_{\mathbf{m}}^{(d)}(\mathbf{u}) \nonumber \\
   &=
   \sum_{\mathbf{m} \in \mathcal{P}}
      B_{\mathbf{m}}^{(d)}\left(\mathbf{z}\right)
      \sum_{i=1}^{r}
         \Psi_{\mathbf{m}^{i}}^{(d)}(\mathbf{u})
         \left(m_{i}+1+\frac{d}{2}(r-i)\right)
         h_{-,i}^{(d)}(\mathbf{m}^{i}) \nonumber \\
   &=
   \sum_{\mathbf{m} \in \mathcal{P}}
      \sum_{i=1}^{r}
      B_{\mathbf{m}_{i}}^{(d)}(\mathbf{z})
         \left(m_{i}+\frac{d}{2}(r-i)\right)
         h_{-,i}^{(d)}(\mathbf{m})
         \Psi_{\mathbf{m}}^{(d)}(\mathbf{u}). \nonumber
\end{align}
{\rm{(4)}} By (\ref{eq:key formula 2}), 
\begin{align}
\sum_{\mathbf{m} \in \mathcal{P}}
   B_{\mathbf{m}}^{(d)}\left(\mathbf{1}-\mathbf{z}\right)
      \Psi _{\mathbf{m}}^{(d)}(\mathbf{u})
   &=
\frac{|\mathbf{u}|}{e^{|\mathbf{u}|}-1}
   {_{0}\mathcal{F}_0}^{(d)}\left( \begin{matrix} \\ \end{matrix};\mathbf{1}-\mathbf{z},\mathbf{u}\right) \nonumber \\
   &=
\frac{|\mathbf{u}|}{e^{|\mathbf{u}|}-1}
   {_{0}\mathcal{F}_0}^{(d)}\left( \begin{matrix} \\ \end{matrix};\mathbf{z},-\mathbf{u}\right)
   e^{|\mathbf{u}|} \nonumber \\
   &=
\frac{|-\mathbf{u}|}{e^{|-\mathbf{u}|}-1}
   {_{0}\mathcal{F}_0}^{(d)}\left( \begin{matrix} \\ \end{matrix};\mathbf{z},-\mathbf{u}\right) \nonumber \\
   &=
   \sum_{\mathbf{m} \in \mathcal{P}}
      B_{\mathbf{m}}^{(d)}\left(\mathbf{z}\right)
      \Psi_{\mathbf{m}}^{(d)}(-\mathbf{u}) \nonumber \\
   &=
   \sum_{\mathbf{m} \in \mathcal{P}}
      (-1)^{|\mathbf{m}|}
      B_{\mathbf{m}}^{(d)}\left(\mathbf{z}\right)
      \Psi_{\mathbf{m}}^{(d)}(\mathbf{u}). \nonumber
\end{align}
{\rm{(5)}} By (\ref{eq:key formula 5}),
\begin{align}
\sum_{\mathbf{m} \in \mathcal{P}}
   B_{\mathbf{m}}^{(d)}\left(\mathbf{z}\right)
   \Psi _{\mathbf{m}}^{(d)}(\mathbf{u})
   &=
   \frac{|\mathbf{u}|}{e^{|\mathbf{u}|}-1}
   {_{0}\mathcal{F}_0}^{(d)}\left( \begin{matrix} \\ \end{matrix};\mathbf{z},\mathbf{u}\right) \nonumber \\
   &=
   \sum_{N=0}^{\infty}
      \frac{B_{N}}{N!}|\mathbf{u}|^{N}
      \sum_{\mathbf{n} \in \mathcal{P}}
      \Phi _{\mathbf{n}}^{(d)}(\mathbf{z})\Psi _{\mathbf{n}}^{(d)}(\mathbf{u}) \nonumber \\
   &=
   \sum_{N=0}^{\infty}
      B_{N}
      \sum_{\mathbf{n} \in \mathcal{P}}
      \Phi _{\mathbf{n}}^{(d)}(\mathbf{z})
      \frac{|\mathbf{u}|^{N}}{N!}\Psi _{\mathbf{n}}^{(d)}(\mathbf{u}) \nonumber \\
   &=
   \sum_{N=0}^{\infty}
      B_{N}
      \sum_{\mathbf{n} \in \mathcal{P}}
      \Phi _{\mathbf{n}}^{(d)}(\mathbf{z})
      \sum_{|\mathbf{m}|-|\mathbf{n}|=N}
   \binom{\mathbf{m}}{\mathbf{n}}^{(d)}\Psi _{\mathbf{m}}^{(d)}(\mathbf{u}) \nonumber \\
   &=
   \sum_{\mathbf{m} \in \mathcal{P}}
   \sum_{\mathbf{n}\subset \mathbf{m}}
   B_{|\mathbf{m}|-|\mathbf{n}|}
   \binom{\mathbf{m}}{\mathbf{n}}^{(d)}
   \Phi _{\mathbf{n}}^{(d)}(\mathbf{z})
   \Psi _{\mathbf{m}}^{(d)}(\mathbf{u}). \nonumber
\end{align}
{\rm{(6)}} By (\ref{eq:key formula 5}),
\begin{align}
\sum_{\mathbf{m} \in \mathcal{P}}
   \Phi _{\mathbf{m}}^{(d)}\left(\mathbf{z}\right)
   \Psi _{\mathbf{m}}^{(d)}(\mathbf{u})
   &=
   {_{0}\mathcal{F}_0}^{(d)}\left( \begin{matrix} \\ \end{matrix};\mathbf{z},\mathbf{u}\right) \nonumber \\
   &=
   \frac{e^{|\mathbf{u}|}-1}{|\mathbf{u}|}
   \sum_{\mathbf{n} \in \mathcal{P}}
   B_{\mathbf{n}}^{(d)}\left(\mathbf{z}\right)
   \Psi _{\mathbf{n}}^{(d)}(\mathbf{u}) \nonumber \\
   &=
   \sum_{N=0}^{\infty}
   \frac{1}{N+1}\frac{1}{N!}|\mathbf{u}|^{N}
   \sum_{\mathbf{n} \in \mathcal{P}}
   B_{\mathbf{n}}^{(d)}\left(\mathbf{z}\right)
   \Psi _{\mathbf{n}}^{(d)}(\mathbf{u}) \nonumber \\
   &=
   \sum_{N=0}^{\infty}
   \frac{1}{N+1}
   \sum_{\mathbf{n} \in \mathcal{P}}
   B_{\mathbf{n}}^{(d)}\left(\mathbf{z}\right)
   \frac{|\mathbf{u}|^{N}}{N!}\Psi _{\mathbf{n}}^{(d)}(\mathbf{u}) \nonumber \\
   &=
   \sum_{N=0}^{\infty}
   \frac{1}{N+1}
   \sum_{\mathbf{n} \in \mathcal{P}}
   B_{\mathbf{n}}^{(d)}(\mathbf{z})
   \sum_{|\mathbf{m}|-|\mathbf{n}|=N}
   \binom{\mathbf{m}}{\mathbf{n}}^{(d)}\Psi _{\mathbf{m}}^{(d)}(\mathbf{u}) \nonumber \\
   &=
   \sum_{\mathbf{m} \in \mathcal{P}}
   \sum_{\mathbf{n}\subset \mathbf{m}}
   \frac{1}{|\mathbf{m}|-|\mathbf{n}|+1}
   B_{\mathbf{n}}^{(d)}(\mathbf{z})
   \binom{\mathbf{m}}{\mathbf{n}}^{(d)}
   \Psi _{\mathbf{m}}^{(d)}(\mathbf{u}). \nonumber
\end{align}
{\rm{(7)}} By (\ref{eq:key formula 2}) and the summation of a geometric series, 
\begin{align}
\sum_{\mathbf{m} \in \mathcal{P}}
   \sum_{i=0}^{N-1}
   B_{\mathbf{m}}^{(d)}\left(\mathbf{z}+\frac{i}{N}\mathbf{1}\right)
      \Psi _{\mathbf{m}}^{(d)}(\mathbf{u})
   &=
   \sum_{i=0}^{N-1}
   \frac{|\mathbf{u}|}{e^{|\mathbf{u}|}-1}
   {_{0}\mathcal{F}_0}^{(d)}\left( \begin{matrix} \\ \end{matrix};\mathbf{z}+\frac{i}{N}\mathbf{1},\mathbf{u}\right) \nonumber \\
   &=
   \frac{|\mathbf{u}|}{e^{|\mathbf{u}|}-1}
   {_{0}\mathcal{F}_0}^{(d)}\left( \begin{matrix} \\ \end{matrix};\mathbf{z},\mathbf{u}\right)
   \sum_{i=0}^{N-1}
   e^{\frac{i}{N}|\mathbf{u}|} \nonumber \\
   &=
   \frac{|\mathbf{u}|}{e^{|\mathbf{u}|}-1}
   {_{0}\mathcal{F}_0}^{(d)}\left( \begin{matrix} \\ \end{matrix};\mathbf{z},\mathbf{u}\right)
   \frac{e^{|\mathbf{u}|}-1}{e^{\frac{|\mathbf{u}|}{N}}-1} \nonumber \\
   &=
   N\frac{\frac{|\mathbf{u}|}{N}}{e^{\frac{|\mathbf{u}|}{N}}-1}
   {_{0}\mathcal{F}_0}^{(d)}\left( \begin{matrix} \\ \end{matrix};N\mathbf{z},\frac{\mathbf{u}}{N}\right) \nonumber \\
   &=
   N
   \sum_{\mathbf{m} \in \mathcal{P}}
   B_{\mathbf{m}}^{(d)}\left(N\mathbf{z}\right)
      \Psi _{\mathbf{m}}^{(d)}\left(\frac{\mathbf{u}}{N}\right) \nonumber \\
   &=
   \sum_{\mathbf{m} \in \mathcal{P}}
   N^{1-|\mathbf{m}|}B_{\mathbf{m}}^{(d)}\left(N\mathbf{z}\right)
      \Psi _{\mathbf{m}}^{(d)}(\mathbf{u}). \nonumber
\end{align}
{\rm{(8)}} By (\ref{eq:key formula 2})
\begin{align}
\sum_{\mathbf{m} \in \mathcal{P}}
   B_{\mathbf{m}}^{(d)}\left(\mathbf{z}+\mathbf{1}\right)
      \Psi _{\mathbf{m}}^{(d)}(\mathbf{u})
   &=
   \frac{|\mathbf{u}|}{e^{|\mathbf{u}|}-1}
   {_{0}\mathcal{F}_0}^{(d)}\left( \begin{matrix} \\ \end{matrix};\mathbf{z}+\mathbf{1},\mathbf{u}\right) \nonumber \\
   &=
   \frac{|\mathbf{u}|}{e^{|\mathbf{u}|}-1}
   e^{|\mathbf{u}|}
   {_{0}\mathcal{F}_0}^{(d)}\left( \begin{matrix} \\ \end{matrix};\mathbf{z},\mathbf{u}\right) \nonumber \\
   &=
\sum_{\mathbf{n} \in \mathcal{P}}
   B_{\mathbf{n}}^{(d)}\left(\mathbf{z}\right)
   e^{|\mathbf{u}|}\Psi _{\mathbf{n}}^{(d)}(\mathbf{u}) \nonumber \\
   &=
\sum_{\mathbf{n} \in \mathcal{P}}
   B_{\mathbf{n}}^{(d)}\left(\mathbf{z}\right)
   \sum_{\mathbf{m} \in \mathcal{P}}
   \binom{\mathbf{m}}{\mathbf{n}}^{(d)}\Psi _{\mathbf{m}}^{(d)}(\mathbf{u}) \nonumber \\
   &=
\sum_{\mathbf{m} \in \mathcal{P}}
   \sum_{\mathbf{n} \subset \mathbf{m}}
   \binom{\mathbf{m}}{\mathbf{n}}^{(d)}
   B_{\mathbf{n}}^{(d)}\left(\mathbf{z}\right)
   \Psi _{\mathbf{m}}^{(d)}(\mathbf{u}). \nonumber
\end{align}
\end{proof}

\section{A multivariate analogue of the multiple Bernoulli polynomials}
For $n$-tuple complex numbers 
$$
{\boldsymbol{\omega}}
   :=
   (\omega _{1},\ldots , \omega _{n}), \quad \omega _{j} \in \mathbb{C}\setminus \{0\},
$$
we define the multiple Bernoulli polynomials $B_{n, m}\left(z \mid {\boldsymbol{\omega}} \right)$ with a generating function 
\begin{align}
e^{zu}
\prod_{j=1}^{n}
   \frac{u}{e^{\omega _{j}u}-1}
   =
   \sum_{m\geq 0}
      B_{n, m}\left(z \mid {\boldsymbol{\omega}} \right)
      \Psi _{m}(u) \quad (|\omega _{j}u|<2\pi ,j=1,\ldots ,n). 
\end{align}
Let 
\begin{align}
\widehat{{\boldsymbol{\omega}}}(j):=&\,(\omega _{1},\cdots, \omega _{j-1}, \omega _{j+1},\cdots, \omega _{r})\in \mathbb{C}^{r-1} \nonumber \\ 
=&\,(\omega _{1},\cdots,\widehat{\omega}_{j},\cdots, \omega _{r}), \nonumber \\
{\boldsymbol{\omega }}^{-}[j]:=&\,(\omega _{1},\cdots,-\omega _{j},\cdots,\omega _{r})\in \mathbb{C}^{r}. \nonumber 
\end{align}
For $B_{n, m}\left(z \mid {\boldsymbol{\omega}} \right)$, the following formulas are well-known (see \cite{N} (12)--(17)). 
\begin{align}
\label{eq:multiple Bernoulli1}
B_{n,m}(cz\mid c{\boldsymbol{\omega}})=&\,c^{m-n}B_{n,m}(z\mid {\boldsymbol{\omega}}) \,\,\, (c\in \mathbb{C}^{*}), \\
\label{eq:multiple Bernoulli2}
B_{n,m}(\vert {\boldsymbol{\omega}}\vert -z\mid {\boldsymbol{\omega}})=&\,(-1)^{m}B_{n,m}(z\mid {\boldsymbol{\omega}}), \\
\label{eq:multiple Bernoulli3}
B_{n,m}(z+\omega_{j}\mid {\boldsymbol{\omega}})-B_{n,m}(z\mid {\boldsymbol{\omega}})=&\,mB_{n-1,m-1}(z\mid \widehat{{\boldsymbol{\omega}}}(j)), \\
\label{eq:multiple Bernoulli4}
B_{n,m}(z\mid {\boldsymbol{\omega}}^{-}[j])=&\,-B_{n,m}(z+\omega_{j}\mid {\boldsymbol{\omega}}), \\
\label{eq:multiple Bernoulli5}
B_{n,m}(z\mid {\boldsymbol{\omega}})+B_{n,m}(z\mid {\boldsymbol{\omega}}^{-}[j])=&\,-mB_{n-1,m-1}(z\mid \widehat{{\boldsymbol{\omega}}}(j)), \\
\label{eq:multiple Bernoulli6}
\frac{d}{dz}B_{n,m}(z\mid {\boldsymbol{\omega}})=&\,mB_{n,m-1}(z\mid {\boldsymbol{\omega}}). 
\end{align}
We also introduce a multivariate analogue of the multiple Bernoulli polynomials by 
\begin{align}
{_{0}\mathcal{F}_0}^{(d)}\left( \begin{matrix} \\ \end{matrix};\mathbf{z},\mathbf{u}\right)
\prod_{i=1}^{n}
   \frac{|\mathbf{u}|}{e^{\omega _{i}|\mathbf{u}|}-1}
   =
   \sum_{\mathbf{m} \in \mathcal{P}}
      B_{n, \mathbf{m}}^{(d)}\left(\mathbf{z}\mid {\boldsymbol{\omega}} \right)
      \Psi _{\mathbf{m}}^{(d)}(\mathbf{u})
\end{align}
and obtain a multivariate analogue of the above formulas (\ref{eq:multiple Bernoulli1})--(\ref{eq:multiple Bernoulli6}) easily. 
\begin{thm}
{\rm{(1)}} 
\begin{align}
\label{eq:multivariate multiple Bernoulli1}
B_{n,\mathbf{m}}^{(d)}(c\mathbf{z}\mid c{\boldsymbol{\omega}})=&c^{|\mathbf{m}|-n}B_{n,\mathbf{m}}^{(d)}(\mathbf{z}\mid {\boldsymbol{\omega}}) \,\,\, (c\in \mathbb{C}^{*}). 
\end{align}
{\rm{(2)}} 
\begin{align}
\label{eq:multivariate multiple Bernoulli2}
B_{n,\mathbf{m}}^{(d)}(\vert {\boldsymbol{\omega}}\vert\mathbf{1}-\mathbf{z}\mid {\boldsymbol{\omega}})=&(-1)^{|\mathbf{m}|}B_{n,\mathbf{m}}^{(d)}(\mathbf{z}\mid {\boldsymbol{\omega}}).
\end{align}
{\rm{(3)}} 
\begin{align}
\label{eq:multivariate multiple Bernoulli3}
B_{n,\mathbf{m}}^{(d)}(\mathbf{z}+\omega_{j}\mathbf{1}\mid {\boldsymbol{\omega}})-B_{n,\mathbf{m}}^{(d)}(\mathbf{z}\mid {\boldsymbol{\omega}})=&
      \sum_{i=1}^{r}
      B_{n-1,\mathbf{m}_{i}}^{(d)}(z\mid \widehat{{\boldsymbol{\omega}}}(j))
         \left(m_{i}+\frac{d}{2}(r-i)\right)
         h_{-,i}^{(d)}(\mathbf{m}).
\end{align}
{\rm{(4)}} 
\begin{align}
\label{eq:multivariate multiple Bernoulli4}
B_{n,\mathbf{m}}^{(d)}(\mathbf{z}\mid {\boldsymbol{\omega}}^{-}[j])=&-B_{n,\mathbf{m}}^{(d)}(\mathbf{z}+\omega_{j}\mathbf{1}\mid {\boldsymbol{\omega}}). \end{align}
{\rm{(5)}} 
\begin{align}
\label{eq:multivariate multiple Bernoulli5}
B_{n,\mathbf{m}}^{(d)}(\mathbf{z}\mid {\boldsymbol{\omega}})+B_{n,\mathbf{m}}^{(d)}(\mathbf{z}\mid {\boldsymbol{\omega}}^{-}[j])=&
   -\sum_{i=1}^{r}
      B_{n-1,\mathbf{m}_{i}}^{(d)}(z\mid \widehat{{\boldsymbol{\omega}}}(j))
         \left(m_{i}+\frac{d}{2}(r-i)\right)
         h_{-,i}^{(d)}(\mathbf{m}).
\end{align}
{\rm{(6)}} 
\begin{align}
\label{eq:multivariate multiple Bernoulli6}
E_{0}(\mathbf{z})B_{n,\mathbf{m}}^{(d)}(\mathbf{z}\mid {\boldsymbol{\omega}})=&\sum_{i=1}^{r}
      B_{n,\mathbf{m}_{i}}^{(d)}(z\mid {\boldsymbol{\omega}})
         \left(m_{i}+\frac{d}{2}(r-i)\right)
         h_{-,i}^{(d)}(\mathbf{m}).
\end{align}
\end{thm}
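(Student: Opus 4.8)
The plan is to prove all six identities by the same generating-function technique used for the preceding theorem, working from
\[
{_{0}\mathcal{F}_0}^{(d)}(;\mathbf{z},\mathbf{u})\,\prod_{i=1}^{n}\frac{|\mathbf{u}|}{e^{\omega_{i}|\mathbf{u}|}-1}
   =\sum_{\mathbf{m}\in\mathcal{P}}B_{n,\mathbf{m}}^{(d)}(\mathbf{z}\mid{\boldsymbol{\omega}})\,\Psi_{\mathbf{m}}^{(d)}(\mathbf{u}),
\]
valid on $|\omega_{i}(u_{1}+\cdots+u_{r})|<2\pi$. In each case I substitute the relevant specialization into this identity, rewrite the left-hand side using (\ref{eq:key formula 2}), (\ref{eq:key formula 3}), (\ref{eq:Pieri 2}) together with an elementary manipulation of the scalar factor $\prod_{i}|\mathbf{u}|/(e^{\omega_{i}|\mathbf{u}|}-1)$, and then compare coefficients of $\Psi_{\mathbf{m}}^{(d)}(\mathbf{u})$. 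Two preliminaries beyond Section 3 are convenient. First, since $E_{0}(\mathbf{z})=\sum_{j=1}^{r}\partial_{z_{j}}$ is the infinitesimal generator of the translation $\mathbf{z}\mapsto\mathbf{z}+t\mathbf{1}$, formula (\ref{eq:key formula 3}) yields, exactly as (\ref{eq:key formula 2}) does for $t=1$, the scalar-shift index law
\[
{_{0}\mathcal{F}_0}^{(d)}(;\mathbf{z}+t\mathbf{1},\mathbf{u})=e^{t|\mathbf{u}|}\,{_{0}\mathcal{F}_0}^{(d)}(;\mathbf{z},\mathbf{u})\qquad(t\in\mathbb{C}).
\]
Second, homogeneity of $\Phi_{\mathbf{m}}^{(d)}$ and $\Psi_{\mathbf{m}}^{(d)}$ (degree $|\mathbf{m}|$) gives ${_{0}\mathcal{F}_0}^{(d)}(;c\mathbf{z},c^{-1}\mathbf{u})={_{0}\mathcal{F}_0}^{(d)}(;\mathbf{z},\mathbf{u})$ for $c\in\mathbb{C}^{*}$ and ${_{0}\mathcal{F}_0}^{(d)}(;\mathbf{z},-\mathbf{u})={_{0}\mathcal{F}_0}^{(d)}(;-\mathbf{z},\mathbf{u})$.

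For (1) I substitute $(\mathbf{z},{\boldsymbol{\omega}},\mathbf{u})\mapsto(c\mathbf{z},c{\boldsymbol{\omega}},c^{-1}\mathbf{u})$: homogeneity collapses ${_{0}\mathcal{F}_0}^{(d)}$, each of the $n$ factors contributes a $c^{-1}$, and $\Psi_{\mathbf{m}}^{(d)}(c^{-1}\mathbf{u})=c^{-|\mathbf{m}|}\Psi_{\mathbf{m}}^{(d)}(\mathbf{u})$, so comparing coefficients gives $c^{-|\mathbf{m}|}B_{n,\mathbf{m}}^{(d)}(c\mathbf{z}\mid c{\boldsymbol{\omega}})=c^{-n}B_{n,\mathbf{m}}^{(d)}(\mathbf{z}\mid{\boldsymbol{\omega}})$. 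For (2) I evaluate the generating function at $(\mathbf{z},-\mathbf{u})$: since $|-\mathbf{u}|=-|\mathbf{u}|$, the $i$-th factor becomes $e^{\omega_{i}|\mathbf{u}|}\,|\mathbf{u}|/(e^{\omega_{i}|\mathbf{u}|}-1)$, so the product picks up $e^{|{\boldsymbol{\omega}}||\mathbf{u}|}$; combining ${_{0}\mathcal{F}_0}^{(d)}(;\mathbf{z},-\mathbf{u})={_{0}\mathcal{F}_0}^{(d)}(;-\mathbf{z},\mathbf{u})$ with the scalar shift turns $e^{|{\boldsymbol{\omega}}||\mathbf{u}|}\,{_{0}\mathcal{F}_0}^{(d)}(;\mathbf{z},-\mathbf{u})$ into ${_{0}\mathcal{F}_0}^{(d)}(;|{\boldsymbol{\omega}}|\mathbf{1}-\mathbf{z},\mathbf{u})$, and matching against $\Psi_{\mathbf{m}}^{(d)}(-\mathbf{u})=(-1)^{|\mathbf{m}|}\Psi_{\mathbf{m}}^{(d)}(\mathbf{u})$ gives the claim. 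For (4) I evaluate at ${\boldsymbol{\omega}}^{-}[j]$: only the $j$-th factor changes, $|\mathbf{u}|/(e^{-\omega_{j}|\mathbf{u}|}-1)=-e^{\omega_{j}|\mathbf{u}|}\,|\mathbf{u}|/(e^{\omega_{j}|\mathbf{u}|}-1)$, so the generating function of $B_{n,\mathbf{m}}^{(d)}(\cdot\mid{\boldsymbol{\omega}}^{-}[j])$ is $-e^{\omega_{j}|\mathbf{u}|}$ times that of $B_{n,\mathbf{m}}^{(d)}(\cdot\mid{\boldsymbol{\omega}})$, which by the scalar shift is $-\sum_{\mathbf{m}}B_{n,\mathbf{m}}^{(d)}(\mathbf{z}+\omega_{j}\mathbf{1}\mid{\boldsymbol{\omega}})\Psi_{\mathbf{m}}^{(d)}(\mathbf{u})$.

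For (3) I use the scalar shift to write the left-hand side as $(e^{\omega_{j}|\mathbf{u}|}-1)\,{_{0}\mathcal{F}_0}^{(d)}(;\mathbf{z},\mathbf{u})\prod_{i=1}^{n}|\mathbf{u}|/(e^{\omega_{i}|\mathbf{u}|}-1)=|\mathbf{u}|\,{_{0}\mathcal{F}_0}^{(d)}(;\mathbf{z},\mathbf{u})\prod_{i\neq j}|\mathbf{u}|/(e^{\omega_{i}|\mathbf{u}|}-1)$; recognizing the last product as the generating function of $B_{n-1,\mathbf{m}}^{(d)}(\mathbf{z}\mid\widehat{{\boldsymbol{\omega}}}(j))$ and applying the Pieri identity (\ref{eq:Pieri 2}) together with the re-indexing $\mathbf{m}^{i}=\mathbf{m}+\epsilon_{i}\leftrightarrow\mathbf{m}$ already carried out in parts (2)--(3) of the preceding theorem produces the stated sum over $i$ with weights $(m_{i}+\tfrac{d}{2}(r-i))\,h_{-,i}^{(d)}(\mathbf{m})$. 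Identity (5) is then immediate by adding (4) to (3). Finally, (6) follows by applying $E_{0}(\mathbf{z})$ to the generating function, using (\ref{eq:key formula 3}) to replace $E_{0}(\mathbf{z})\,{_{0}\mathcal{F}_0}^{(d)}(;\mathbf{z},\mathbf{u})$ by $|\mathbf{u}|\,{_{0}\mathcal{F}_0}^{(d)}(;\mathbf{z},\mathbf{u})$, and then (\ref{eq:Pieri 2}) with the same re-indexing.

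I do not expect a genuine obstacle: the substantive ingredients---the Jack-polynomial Pieri rule (\ref{eq:Pieri 2}), the index law (\ref{eq:key formula 2})--(\ref{eq:key formula 3}), and the combinatorial re-indexing---are all already established, and each of (1)--(6) is obtained from the fixed template above plus a purely one-dimensional computation with the factor $\prod_{i}|\mathbf{u}|/(e^{\omega_{i}|\mathbf{u}|}-1)$. The only points needing care are bookkeeping: keeping the sign conventions straight for ${\boldsymbol{\omega}}^{-}[j]$ and for $\mathbf{u}\mapsto-\mathbf{u}$ in (2), (4), (5); making the re-indexing in (3) and (6) respect the constraint $\mathbf{m}+\epsilon_{i}\in\mathcal{P}$ exactly as before; and restricting to the region $|\omega_{i}(u_{1}+\cdots+u_{r})|<2\pi$ where the geometric-series expansions converge, so that coefficient comparison of the $\Psi_{\mathbf{m}}^{(d)}(\mathbf{u})$ is legitimate.
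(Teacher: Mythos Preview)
Your proposal is correct and follows essentially the same generating-function approach as the paper: for each of (1)--(6) you manipulate the defining identity for $B_{n,\mathbf{m}}^{(d)}(\mathbf{z}\mid{\boldsymbol{\omega}})$ using the index law (\ref{eq:key formula 2})--(\ref{eq:key formula 3}), the Pieri rule (\ref{eq:Pieri 2}), homogeneity of $\Psi_{\mathbf{m}}^{(d)}$, and an elementary rewriting of the scalar product $\prod_i |\mathbf{u}|/(e^{\omega_i|\mathbf{u}|}-1)$, then compare coefficients of $\Psi_{\mathbf{m}}^{(d)}(\mathbf{u})$---exactly as the paper does. The only cosmetic differences are that for (1) the paper keeps $\mathbf{u}$ fixed and passes to $c\mathbf{u}$ inside ${_{0}\mathcal{F}_0}^{(d)}$ rather than substituting $c^{-1}\mathbf{u}$ at the outset, and for (2) it starts from $(|{\boldsymbol{\omega}}|\mathbf{1}-\mathbf{z},\mathbf{u})$ rather than $(\mathbf{z},-\mathbf{u})$; these are the same computation read in opposite directions.
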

\begin{proof}
{\rm{(1)}} From the generating function of the multiple multivariate Bernoulli polynomials and homogeneity of Jack polynomials, we have 
\begin{align}
\sum_{\mathbf{m} \in \mathcal{P}}
   B_{n,\mathbf{m}}^{(d)}(c\mathbf{z}\mid c{\boldsymbol{\omega}})
   \Psi _{\mathbf{m}}^{(d)}(\mathbf{u})
   &=
   {_{0}\mathcal{F}_0}^{(d)}\left( \begin{matrix} \\ \end{matrix};c\mathbf{z},\mathbf{u}\right)
\prod_{i=1}^{n}
   \frac{|\mathbf{u}|}{e^{c\omega _{i}|\mathbf{u}|}-1} \nonumber \\
   &=
   c^{-n}
   {_{0}\mathcal{F}_0}^{(d)}\left( \begin{matrix} \\ \end{matrix};\mathbf{z},c\mathbf{u}\right)
\prod_{i=1}^{n}
   \frac{|c\mathbf{u}|}{e^{\omega _{i}|c\mathbf{u}|}-1} \nonumber \\
   &=
   c^{-n}
   {_{0}\mathcal{F}_0}^{(d)}\left( \begin{matrix} \\ \end{matrix};\mathbf{z},c\mathbf{u}\right)
\prod_{i=1}^{n}
   \frac{|c\mathbf{u}|}{e^{\omega _{i}|c\mathbf{u}|}-1} \nonumber \\
   &=
   c^{-n}
   \sum_{\mathbf{m} \in \mathcal{P}}
      B_{n, \mathbf{m}}^{(d)}\left(\mathbf{z}\mid {\boldsymbol{\omega}} \right)
      \Psi _{\mathbf{m}}^{(d)}(c\mathbf{u}) \nonumber \\
   &=
   \sum_{\mathbf{m} \in \mathcal{P}}
      c^{|\mathbf{m}|-n}
      B_{n, \mathbf{m}}^{(d)}\left(\mathbf{z}\mid {\boldsymbol{\omega}} \right)
      \Psi _{\mathbf{m}}^{(d)}(\mathbf{u}). \nonumber
\end{align}
{\rm{(2)}} By (\ref{eq:key formula 2}), 
\begin{align}
\sum_{\mathbf{m} \in \mathcal{P}}
   B_{n,\mathbf{m}}^{(d)}(\vert {\boldsymbol{\omega}}\vert\mathbf{1}-\mathbf{z}\mid {\boldsymbol{\omega}})
   \Psi _{\mathbf{m}}^{(d)}(\mathbf{u})
   &=
   {_{0}\mathcal{F}_0}^{(d)}\left( \begin{matrix} \\ \end{matrix};\vert {\boldsymbol{\omega}}\vert\mathbf{1}-\mathbf{z},\mathbf{u}\right)
\prod_{i=1}^{n}
   \frac{|\mathbf{u}|}{e^{\omega _{i}|\mathbf{u}|}-1} \nonumber \\
   &=
   e^{\vert {\boldsymbol{\omega}}\vert \vert \mathbf{u}\vert}
   {_{0}\mathcal{F}_0}^{(d)}\left( \begin{matrix} \\ \end{matrix};\mathbf{z},-\mathbf{u}\right)
\prod_{i=1}^{n}
   \frac{|\mathbf{u}|}{e^{\omega _{i}|\mathbf{u}|}-1} \nonumber \\
   &=
   {_{0}\mathcal{F}_0}^{(d)}\left( \begin{matrix} \\ \end{matrix};\mathbf{z},-\mathbf{u}\right)
\prod_{i=1}^{n}
   \frac{|\mathbf{u}|e^{\omega _{i}|\mathbf{u}|}}{e^{\omega _{i}|\mathbf{u}|}-1} \nonumber \\
   &=
   {_{0}\mathcal{F}_0}^{(d)}\left( \begin{matrix} \\ \end{matrix};\mathbf{z},-\mathbf{u}\right)
\prod_{i=1}^{n}
   \frac{|-\mathbf{u}|}{e^{-\omega _{i}|\mathbf{u}|}-1} \nonumber \\
   &=
   \sum_{\mathbf{m} \in \mathcal{P}}
   (-1)^{|\mathbf{m}|}
   B_{n,\mathbf{m}}^{(d)}(\mathbf{z}\mid {\boldsymbol{\omega}})
   \Psi _{\mathbf{m}}^{(d)}(\mathbf{u}). \nonumber
\end{align}
{\rm{(3)}} By (\ref{eq:key formula 2}), (\ref{eq:Pieri 2})
\begin{align}
& \sum_{\mathbf{m} \in \mathcal{P}}
   (B_{n,\mathbf{m}}^{(d)}(\mathbf{z}+\omega_{j}\mathbf{1}\mid {\boldsymbol{\omega}})-B_{n,\mathbf{m}}^{(d)}(\mathbf{z}\mid {\boldsymbol{\omega}}))
   \Psi _{\mathbf{m}}^{(d)}(\mathbf{u}) \nonumber \\
   & \quad =
   \left({_{0}\mathcal{F}_0}^{(d)}\left( \begin{matrix} \\ \end{matrix};\mathbf{z}+\omega_{j}\mathbf{1},\mathbf{u}\right)
   -{_{0}\mathcal{F}_0}^{(d)}\left( \begin{matrix} \\ \end{matrix};\mathbf{z},\mathbf{u}\right)\right)
\prod_{i=1}^{n}
   \frac{|\mathbf{u}|}{e^{\omega _{i}|\mathbf{u}|}-1} \nonumber \\
   & \quad =
   (e^{\omega_{j}|\mathbf{u}|}-1)
   {_{0}\mathcal{F}_0}^{(d)}\left( \begin{matrix} \\ \end{matrix};\mathbf{z},\mathbf{u}\right)
\prod_{i=1}^{n}
   \frac{|\mathbf{u}|}{e^{\omega _{i}|\mathbf{u}|}-1} \nonumber \\
   & \quad =
   |\mathbf{u}|{_{0}\mathcal{F}_0}^{(d)}\left( \begin{matrix} \\ \end{matrix};\mathbf{z},\mathbf{u}\right)
\prod_{1\leq i\not=j\leq n}
   \frac{|\mathbf{u}|}{e^{\omega _{i}|\mathbf{u}|}-1} \nonumber \\
   & \quad =
   \sum_{\mathbf{m} \in \mathcal{P}}
      B_{n-1,\mathbf{m}}^{(d)}(z\mid \widehat{{\boldsymbol{\omega}}}(j))
      |\mathbf{u}|\Psi _{\mathbf{m}}^{(d)}(\mathbf{u}) \nonumber \\
   & \quad =
   \sum_{\mathbf{m} \in \mathcal{P}}
      B_{n-1,\mathbf{m}}^{(d)}(z\mid \widehat{{\boldsymbol{\omega}}}(j))
      \sum_{i=1}^{r}
         \Psi_{\mathbf{m}^{i}}^{(d)}(\mathbf{u})
         \left(m_{i}+1+\frac{d}{2}(r-i)\right)
         h_{-,i}^{(d)}(\mathbf{m}^{i}) \nonumber \\
   & \quad =
   \sum_{\mathbf{m} \in \mathcal{P}}
      \sum_{i=1}^{r}
      B_{n-1,\mathbf{m}_{i}}^{(d)}(z\mid \widehat{{\boldsymbol{\omega}}}(j))
         \left(m_{i}+\frac{d}{2}(r-i)\right)
         h_{-,i}^{(d)}(\mathbf{m})
   \Psi _{\mathbf{m}}^{(d)}(\mathbf{u}). \nonumber
\end{align}
{\rm{(4)}} By (\ref{eq:key formula 2})
\begin{align}
& \sum_{\mathbf{m} \in \mathcal{P}}
   B_{n,\mathbf{m}}(\mathbf{z}\mid {\boldsymbol{\omega}}^{-}[j])
   \Psi _{\mathbf{m}}^{(d)}(\mathbf{u}) \nonumber \\
   & \quad =
   {_{0}\mathcal{F}_0}^{(d)}\left( \begin{matrix} \\ \end{matrix};\mathbf{z},\mathbf{u}\right)
   \frac{|\mathbf{u}|}{e^{-\omega _{j}|\mathbf{u}|}-1}
   \prod_{1\leq i\not=j\leq n}
      \frac{|\mathbf{u}|}{e^{\omega _{i}|\mathbf{u}|}-1} \nonumber \\
   & \quad =
   -e^{\omega _{j}|\mathbf{u}|}{_{0}\mathcal{F}_0}^{(d)}\left( \begin{matrix} \\ \end{matrix};\mathbf{z},\mathbf{u}\right)
   \frac{|\mathbf{u}|}{e^{\omega _{j}|\mathbf{u}|}-1}
   \prod_{1\leq i\not=j\leq n}
      \frac{|\mathbf{u}|}{e^{\omega _{i}|\mathbf{u}|}-1} \nonumber \\
   & \quad =
   -{_{0}\mathcal{F}_0}^{(d)}\left( \begin{matrix} \\ \end{matrix};\mathbf{z}+\omega _{j}\mathbf{1},\mathbf{u}\right)
   \prod_{i=1}^{n}
      \frac{|\mathbf{u}|}{e^{\omega _{i}|\mathbf{u}|}-1} \nonumber \\
   & \quad =
   \sum_{\mathbf{m} \in \mathcal{P}}
   -B_{n,\mathbf{m}}(\mathbf{z}+\omega _{j}\mathbf{1}\mid {\boldsymbol{\omega}})
   \Psi _{\mathbf{m}}^{(d)}(\mathbf{u}). \nonumber     
\end{align}
{\rm{(5)}} By (\ref{eq:multivariate multiple Bernoulli4}) and (\ref{eq:multivariate multiple Bernoulli3}), we have
\begin{align}
& B_{n,\mathbf{m}}(\mathbf{z}\mid {\boldsymbol{\omega}})+B_{n,\mathbf{m}}(\mathbf{z}\mid {\boldsymbol{\omega}}^{-}[j]) \nonumber \\
   & \quad =
   B_{n,\mathbf{m}}(\mathbf{z}\mid {\boldsymbol{\omega}})-B_{n,\mathbf{m}}(\mathbf{z}+\omega _{j}\mathbf{1}\mid {\boldsymbol{\omega}}) \nonumber \\
   & \quad =
      -\sum_{i=1}^{r}
      B_{n-1,\mathbf{m}_{i}}^{(d)}(z\mid \widehat{{\boldsymbol{\omega}}}(j))
         \left(m_{i}+\frac{d}{2}(r-i)\right)
         h_{-,i}^{(d)}(\mathbf{m}). \nonumber
\end{align}
{\rm{(6)}} By (\ref{eq:key formula 3}) and (\ref{eq:Pieri 2}), we have 
\begin{align}
\sum_{\mathbf{m} \in \mathcal{P}}
   E_{0}(\mathbf{z})B_{n,\mathbf{m}}^{(d)}(\mathbf{z}\mid {\boldsymbol{\omega}})
   \Psi _{\mathbf{m}}^{(d)}(\mathbf{u})
   &=
   E_{0}(\mathbf{z}){_{0}\mathcal{F}_0}^{(d)}\left( \begin{matrix} \\ \end{matrix};\mathbf{z},\mathbf{u}\right)
\prod_{i=1}^{n}
   \frac{|\mathbf{u}|}{e^{c\omega _{i}|\mathbf{u}|}-1} \nonumber \\
   &=
   |\mathbf{u}|{_{0}\mathcal{F}_0}^{(d)}\left( \begin{matrix} \\ \end{matrix};\mathbf{z},\mathbf{u}\right)
\prod_{i=1}^{n}
   \frac{|\mathbf{u}|}{e^{c\omega _{i}|\mathbf{u}|}-1} \nonumber \\
   &=
\sum_{\mathbf{m} \in \mathcal{P}}
   B_{n,\mathbf{m}}^{(d)}(\mathbf{z}\mid {\boldsymbol{\omega}})
   |\mathbf{u}|\Psi _{\mathbf{m}}^{(d)}(\mathbf{u}) \nonumber \\
   &=
\sum_{\mathbf{m} \in \mathcal{P}}
   B_{n,\mathbf{m}}^{(d)}(\mathbf{z}\mid {\boldsymbol{\omega}})
      \sum_{i=1}^{r}
         \Psi_{\mathbf{m}^{i}}^{(d)}(\mathbf{u})
         \left(m_{i}+1+\frac{d}{2}(r-i)\right)
         h_{-,i}^{(d)}(\mathbf{m}^{i}) \nonumber \\
   &=\sum_{i=1}^{r}
      B_{n,\mathbf{m}_{i}}^{(d)}(z\mid {\boldsymbol{\omega}})
         \left(m_{i}+\frac{d}{2}(r-i)\right)
         h_{-,i}^{(d)}(\mathbf{m})
         \Psi_{\mathbf{m}}^{(d)}(\mathbf{u}). \nonumber
\end{align}
\end{proof}

\section{Concluding remarks}
Since our multivariate Bernoulli polynomials have various properties which are regarded as a natural generalization of (\ref{eq:found prop1}) - (\ref{eq:found prop7}), our multivariate Bernoulli polynomials are regarded as a good multivariate analogue of Bernoulli polynomials. 
Therefore we desire to find a multivariate zeta function whose some special values are written by our multivariate Bernoulli polynomials $B_{\mathbf{m}}^{(d)}\left(\mathbf{z}\right)$. 

\section*{Acknowledgements}
We thank Professor M. Noumi for his precious advices on Jack and interpolation Jack polynomials.


\bibliographystyle{amsplain}

\begin{thebibliography}{ABCD}
\bibitem[E]{E}
{Erd\'{e}lyi, A. et al.} : 
{\em Higher transcendental functions. Vol. 1}, 
McGraw-Hill New York, 1953.
\bibitem[FK]{FK}
{J.\,Faraut and A.\,Kor\'{a}nyi} : 
{\em Analysis on Symmetric Cones}, 
{Clarendon Press, Oxford, 1994}. 
\bibitem[Ka]{Ka}
{J. Kaneko} : 
{\em Selberg integrals and hypergeometric functions associated with Jack polynomials}, 
{SIAM J. Math. Anal., {\bf{24}} (1993), 1086--1110}. 
\bibitem[Ko]{Ko}
{T. H. Koornwinder} : 
{\em Okounkov's BC-type interpolation Macdonald polynomials and their $q=1$ limit}, 
{S\'{e}m. Lothar. Combin, {\bf{72}} (2014/15), 27pp}. 
\bibitem[L]{L}
{M. Lassalle} : 
{\em Coefficients binomiaux g\'{e}n\'{e}ralis\'{e}s et polyn\^{o}mes de Macdonald}, 
{J. Funct. Anal., {\bf{158}} (1998), 289--324}. 
\bibitem[M]{M}
{I.\,G.\,Macdonald} : 
{\em Symmetric Functions and Hall Polynomials}, 
Oxford University Press,\,1995.
\bibitem[N]{N}
{A.\,Narukawa} : 
{\em The modular properties and the integral representations of the multiple elliptic gamma functions}, 
{Adv. Math. {\bf{189}}-2 (2004), 247--267}. 
\bibitem[OO]{OO}
{A.\,Okounkov and G. Olshanski} : 
{\em Shifted Jack polynomials, binomial formula, and applications}, 
{Math. Res. Letters, {\bf{4}} (1997), 69--78}.
\bibitem[S]{S}
{R. Stanley} : 
{\em Some combinatorial properties of Jack symmetric functions}, 
{Adv. Math., {\bf{77}}-1 (1989) 76--115}.
\bibitem[VK]{VK}
{N.\,Ja.\,Vilenkin and A.\,U.\,Klimyk}:
{\em Representation of Lie Groups and Special Functions --Recent Advances--},
Kluwer Academic Publishers,\,1995.
\end{thebibliography}

\noindent 
Department of Mathematics, Graduate School of Science, Kobe University, \\
1-1, Rokkodai, Nada-ku, Kobe, 657-8501, JAPAN\\
E-mail: g-shibukawa@math.kobe-u.ac.jp

\end{document}